
\documentclass{daj}

\usepackage{amsmath}
\usepackage{amsfonts}
\usepackage{amsthm}

\theoremstyle{plain}
\newtheorem{theorem}{Theorem}[section]
\newtheorem{lemma}[theorem]{Lemma}
\newtheorem{corollary}[theorem]{Corollary}

\newtheorem{proposition}[theorem]{Proposition}

\newtheorem*{theorem*}{Theorem}
\newtheorem{problem}{Problem}
\newtheorem*{problem*}{Problem}

\theoremstyle{definition}
\newtheorem{definition}[theorem]{Definition}

\newtheorem*{claim*}{Claim}
\newtheorem*{remark}{Remark}
\newtheorem*{remarks}{Remarks}

\newcommand{\lE}{\mathbb{E}^{\log}}
\newcommand{\C}{{\mathbb C}}
\newcommand{\E}{{\mathbb E}}

\newcommand{\N}{{\mathbb N}}
\renewcommand{\P}{{\mathbb P}}
\newcommand{\Q}{{\mathbb Q}}
\newcommand{\R}{{\mathbb R}}
\renewcommand{\S}{\mathbb{S}}
\newcommand{\T}{{\mathbb T}}


\newcommand{\CM}{{\mathcal M}}

\newcommand{\CX}{{\mathcal X}}
\newcommand{\Z}{{\mathbb Z}}

\newcommand{\inv}{^{-1}}
\newcommand{\bN}{{\mathbf{N}}}

\newcommand{\norm}[1]{\left\Vert #1\right\Vert}

\DeclareMathOperator{\id}{id}

\dajAUTHORdetails{%
	title = {Good Weights for the Erd\H{o}s Discrepancy Problem}, 
	author = {Nikos Frantzikinakis},
	plaintextauthor = {Nikos Frantzikinakis},
	%
	%
	plaintexttitle = {Good Weights for the Erdos Discrepancy Problem}, 
	%
	%
	%
	%
	keywords ={Multiplicative functions, discrepancy,  Erd\H{o}s discrepancy problem, Elliott conjecture, Furstenberg correspondence.},
}   

\dajEDITORdetails{%
	year={2020},
	number={8},
	received={30 April 2020},   
	published={14 July 2020},  
	doi={10.19086/da.13688},       
}   

\begin{document}
	
	\begin{frontmatter}[classification=text]
		
		\title{Good Weights for the Erd\H{o}s Discrepancy Problem} 
		

		\author[Nikos]{Nikos Frantzikinakis\thanks{Supported by the
					 Hellenic Foundation for	Research and Innovation,	Project 	No: 1684.}}


\begin{abstract}
The Erd\H{o}s discrepancy problem, now a theorem by T.~Tao, asks whether
every sequence with values plus or minus one has unbounded discrepancy along all homogeneous arithmetic progressions. We establish weighted variants of this problem,
  for weights given  either by structured sequences that enjoy some irrationality features, or  certain random sequences. As an intermediate result, we establish that weighted sums of
   bounded multiplicative functions and products of shifts of such functions are unbounded. A key ingredient in our analysis for the structured weights, is a structural result for measure preserving systems
naturally associated with bounded  multiplicative functions  that was recently obtained in joint work with B.~Host.
\end{abstract}

\end{frontmatter}

\maketitle

\section{Introduction and main results}\label{S:MainResults}
\subsection{Introduction}The Erd\H{o}s discrepancy problem is an elementary question that dates back to the 1930's and asks if  there is a sequence  $a\colon \N\to \{-1, 1\}$ that is evenly distributed along all homogeneous arithmetic progressions, in the sense that the sequence of partial sums $(\sum_{k=1}^n a(dk))_{n\in\N}$ is bounded  uniformly in $d\in \N$. The problem remained dormant for a long time and it was not until 2010 that interest was rejuvenated,
when it became the subject of the  \texttt{Polymath5} project  (see \cite{G13, P5} for related details).
The problem was finally solved in 2015  by  T.~Tao  \cite{T16}
who proved the following
(henceforth, with $\S$ we denote the unit circle and with $\mathbb{U}$ the complex unit disc):
\begin{theorem}[Tao \cite{T16}]\label{T:Tao}
	For every sequence  $a\colon \N\to \S$ we have
	\begin{equation}\label{E:EDP}
	\sup_{d, n\in \N}\Big|\sum_{k=1}^n a(dk) \Big|=+\infty.
	\end{equation}
\end{theorem}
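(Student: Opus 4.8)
The plan is to argue by contradiction: suppose $a\colon\N\to\S$ has bounded discrepancy, so that $|\sum_{k=1}^n a(dk)|\le C$ for all $d,n$. The first step is a reduction to the multiplicative case, following the circle of ideas developed in the \texttt{Polymath5} project: one extracts from $a$ a \emph{completely multiplicative} function $g\colon\N\to\S$ that still has bounded discrepancy. For such a $g$ we have $\sum_{k=1}^n g(dk)=g(d)\sum_{k=1}^n g(k)$ with $|g(d)|=1$, so the hypothesis collapses to the single statement that the partial sums $S(n):=\sum_{k=1}^n g(k)$ satisfy $|S(n)|\le C$ for all $n$. Thus it suffices to show that no completely multiplicative $g\colon\N\to\S$ has uniformly bounded partial sums.

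Second, I would use the pretentious theory of multiplicative functions to isolate the essential case. Halász's theorem ties the size of $S(N)$ to the pretentious distances $\D(g,\chi\cdot n^{it};\infty)$: if $g$ correlates with some twisted character $\chi(n)\,n^{it}$ (finite distance), then the partial sums are governed by this structure and classical mean-value estimates show $|S(N)|$ is unbounded, the archimedean factor $n^{it}$ producing genuine growth. Hence we may assume $g$ is \emph{aperiodic}, i.e. $\D(g,\chi\cdot n^{it};\infty)=\infty$ for every Dirichlet character $\chi$ and every $t\in\R$; this is precisely the regime in which the two-point correlations should vanish.

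The heart of the argument is then a two-point logarithmically averaged Elliott estimate: for aperiodic $1$-bounded multiplicative $g$ and every fixed $h\neq 0$,
\begin{equation*}
\lE_{n\le N}\big[g(n+h)\,\overline{g(n)}\big]\longrightarrow 0 \quad\text{as } N\to\infty.
\end{equation*}
I expect this to be the main obstacle, and would establish it by Tao's \emph{entropy decrement argument}. Complete multiplicativity relates the shift-$1$ correlation to shift-$p$ correlations: on the multiples $n=pm$ one has $g(pm)\,\overline{g(pm+p)}=g(m)\,\overline{g(m+1)}$, so the shift-$p$ sum reproduces the shift-$1$ sum along the sparse fibre $p\mid n$. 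Summing these relations over a dyadic range of primes with a Turán--Kubilius weighting would force the correlation to be negligible, were it not for a possible statistical coupling between the event $p\mid n$ and the local values of $g$ near $n$. The decisive step is to rule this out: the mutual information between the sequence and its residue modulo $p$, summed over primes in a dyadic window, is controlled by the bounded entropy of the one-point distribution of $g$, so for a positive proportion of primes the coupling is negligible and the averaging goes through.

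Finally I would close the loop with a second-moment computation. For any $H$, bounded partial sums give $\sum_{j=1}^{H}g(n+j)=S(n+H)-S(n)$, hence $\big|\sum_{j=1}^{H}g(n+j)\big|\le 2C$, so
\begin{equation*}
\lE_{n\le N}\Big|\sum_{j=1}^{H}g(n+j)\Big|^2\le 4C^2.
\end{equation*}
On the other hand, expanding the square and applying the two-point estimate to each off-diagonal term $\lE_{n\le N}[g(n+i)\,\overline{g(n+j)}]\to 0$ shows that the left-hand side tends to the diagonal contribution $H$. Letting $N\to\infty$ yields $H\le 4C^2$ for every $H$, which is absurd once $H>4C^2$. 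This contradiction proves the theorem. The genuinely hard input is the entropy decrement step; the reduction to the multiplicative case and the final second-moment estimate are comparatively soft.
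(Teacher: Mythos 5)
You should first note that the paper does not actually prove Theorem~\ref{T:Tao}: it is quoted from \cite{T16}, and the paper only summarizes the proof strategy (Section~1.4) and re-develops the reduction machinery (Propositions~\ref{P:Bochner}, \ref{P:Reduction}, \ref{P:corrzero}) for its own weighted purposes. Your proposal is, in outline, a sketch of Tao's original argument, and three of its four steps are the right ingredients: the pretentious/aperiodic dichotomy, the logarithmically averaged two-point Elliott estimate proved by the entropy decrement argument, and the closing second-moment computation, which is exactly Proposition~\ref{P:corrzero}.

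The genuine gap is in your first step. You cannot ``extract from $a$ a completely multiplicative function $g\colon\N\to\S$ that still has bounded discrepancy'': no such extraction is known, and this was precisely the stumbling block of the naive \texttt{Polymath5} reduction. What averaging over the dilation parameter $d$ actually produces (via Fourier analysis on $(\Z/M\Z)^r$ in \cite{T16}, or via Bochner's theorem as in Proposition~\ref{P:Bochner} here) is a \emph{probability measure} $\sigma$ on the group $\CM$ of completely multiplicative functions together with the bound $\sup_{n}\int_{\CM}\big|\sum_{k=1}^n f(k)\big|^2\,d\sigma(f)<+\infty$; one cannot interchange $\sup_n$ with the integral to locate a single $f$ in the support of $\sigma$ with bounded partial sums. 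Consequently the entire dichotomy must be run for a \emph{stochastic} completely multiplicative function. The aperiodic case survives essentially as you describe it, since the vanishing of the correlations can be integrated against $\sigma$ before running the second-moment argument (this is the shape of Proposition~\ref{P:corrzero2}). But the pretentious case becomes substantially harder: $\sigma$ may charge both pretentious and aperiodic functions, and even for a single $f$ pretending to be $\chi(n)n^{it}$ with $\chi$ non-principal, the unboundedness of the partial sums is not a ``classical mean-value estimate'' (indeed $\sum_{n\le N}\chi(n)n^{it}$ itself is bounded for $t=0$) but a Borwein--Choi--Coons-type analysis exploiting that $f$ must take unit-modulus values at the primes dividing the conductor, where $\chi$ vanishes. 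That case analysis is where much of the work in \cite{T16} lies, and your sketch does not account for it.
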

We seek to obtain weighted variants of the previous result. To facilitate exposition, we introduce the following notion:

\begin{definition}
	We say that a  sequence $w\colon \N\to \mathbb{U}$ is {\em a  good weight for the Erd\H{o}s discrepancy problem}, or simply, {\em a good weight}, if for
	every $a\colon \N\to \S$ we have
	\begin{equation}\label{E:WEDP}
		\sup_{d, n\in \N}\Big|\sum_{k=1}^n a(dk) \, w(k)\Big|=+\infty.
	\end{equation}
\end{definition}
Theorem~\ref{T:Tao}  implies that  $w=1$ (and more generally  $w=f$ where $f\colon \N\to \S$ is a completely multiplicative function)  is a good weight for the Erd\H{o}s discrepancy problem.  On the other hand, sequences with bounded partial sums, like
the sequence $(e(k\alpha))_{k\in\N}$, where $\alpha\in \R\setminus\Z$ and $e(t):=e^{2\pi i t}$, are not  good weights, and  more generally,  a product of a completely multiplicative function $f\colon \N\to\S$
with a sequence that has bounded partial sums is not a good weight (take  $a=\bar{f}$).
It is less clear  if some other    oscillatory sequences  like $(e(k^l\alpha))_{k\in\N}$, where $l\geq 2$ and  $\alpha$ is irrational, or random sequences of $\pm 1$'s
are good weights.
We will show  in Corollary~\ref{C:EDP1} and Theorem~\ref{T:Random1} that they are; that is,  for every $a\colon \N\to \S$ we have
$$
 	\sup_{d, n\in \N}\Big|\sum_{k=1}^n a(dk) \,e(k^l\alpha)\Big|=+\infty
$$
and a similar statement holds if we use as weights  random sequences of $\pm 1$.
Moreover, in Theorem~\ref{T:EDP1} we give a rather   general criterion that allows us to show that a  large class of  zero entropy sequences  that enjoy  certain irrationality features are good weights for the Erd\H{o}s discrepancy problem.

On a related result of independent interest, we show that certain weighted sums
of multiplicative functions are unbounded. For instance,
 we prove in Corollary~\ref{C:mf1} that if $l\geq 2$,  $\alpha$ is irrational, and $f, g \colon \N\to \S$ are multiplicative functions, then
	$$
	\sup_{ n\in \N}\Big|\sum_{k=1}^n f(k)\,  g(k+1) \, e(k^l\alpha)\Big|=+\infty,
	$$
and  in Theorems~\ref{T:Random2}  we  prove an analogous  result when the  weights are given by  random sequences of $\pm 1$'s.

\subsection{Results related to the weighted Erd\H{o}s discrepancy problem}
 The next result gives   necessary conditions for a bounded sequence of complex numbers to be a good weight for the  Erd\H{o}s discrepancy problem. In order to explain the exact assumptions needed, we use ergodic terminology that is
explained in Section~\ref{SS:ergodicnotation}, and in Corollary~\ref{C:EDP1} we give some  explicit examples. See also Section~\ref{SS:def} for our notation regarding averages; for reasons that are explained in Section~\ref{SS:ergodicnotation} we use logarithmic averages.

\begin{definition}
We say that the sequence   $a\colon \N\to \mathbb{U}$
	\begin{itemize}
\item	has  {\em vanishing self-correlations}, if
	for every $h\in\N$ we have
	$$
	\lE_{n\in\N}\, a(n+h)\, \overline{a(n)}=0;
	$$
\item is  {\em non-null for logarithmic averages}, or simply, {\em non-null},  if
	$$
	\liminf_{N\to\infty}\lE_{n\in[N]}\, |a(n)|^2>0.
	$$
	\end{itemize}
\end{definition}
Our main result regarding structured (zero entropy) weights  is the following one:
\begin{theorem}\label{T:EDP1}
	Suppose that   $w\colon \N\to \mathbb{U}$     is non-null and  totally ergodic,  and has zero entropy and  vanishing self-correlations. Then $w$ is a good weight for the Erd\H{o}s discrepancy problem.
\end{theorem}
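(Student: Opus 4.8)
The plan is to argue by contradiction and to remove the weight by reducing to the multiplicative case. Suppose $w$ is not a good weight, so that there is $a\colon\N\to\S$ with $\sup_{d,n}\big|\sum_{k=1}^n a(dk)\,w(k)\big|=C<\infty$. The first step is to exploit the dilation structure exactly as in the reduction of the (unweighted) Erd\H{o}s discrepancy problem to the multiplicative case: taking a generalized limit over the dilation parameter $d$ along a suitable multiplicative averaging (ultrafilter) structure over $\N$, the family $\big(a(d\,\cdot)\big)_d$ converges to a completely multiplicative $f\colon\N\to\S$, and since the weight $w$ does not involve $d$ it simply rides along in each finite partial sum. This should produce a completely multiplicative $f\colon\N\to\S$ with $\sup_n\big|\sum_{k=1}^n f(k)\,w(k)\big|<\infty$. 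Hence it suffices to prove that for every completely multiplicative $f\colon\N\to\S$ the weighted sums $\sum_{k\le n} f(k)\,w(k)$ are unbounded; this is the intermediate statement about multiplicative functions advertised in the introduction.

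Write $c:=f\cdot w$. I would deduce unboundedness of the partial sums of $c$ from a second moment (spectral) computation: passing to the Furstenberg system of $c$ for logarithmic averages along a subsequence on which all correlations converge, the self-correlations $\gamma(h):=\lE_n c(n+h)\overline{c(n)}$ are the Fourier coefficients of the spectral measure $\sigma_c$ of $c$. If I can show that $\gamma(h)=0$ for every $h\ge1$, then $\sigma_c=\gamma(0)\cdot\mathrm{Leb}_{\T}$ is a multiple of Lebesgue measure, while $\gamma(0)=\lE_n|w(n)|^2>0$ by non-nullity (recall $|f|=1$). A sequence whose spectral measure has a non-trivial Lebesgue component cannot have bounded partial sums, since $\tfrac1N\sum_{n\le N}\big|\sum_{k\le n}c(k)\big|^2$ then grows linearly in $N$; this forces the partial sums of $c$ to be unbounded, as required.

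It remains to establish the vanishing self-correlations of $c$, that is, $\lE_n f(n+h)\overline{f(n)}\,w(n+h)\overline{w(n)}=0$ for every $h\ge1$, and here I would split according to the Hal\'asz--type dichotomy for $f$. If $f$ is pretentious, say $f$ pretends to be $n\mapsto\chi(n)\,n^{it}$ for a Dirichlet character $\chi$ of period $q$ and some $t\in\R$, then for fixed $h$ the product $f(n+h)\overline{f(n)}$ agrees in logarithmic density with the $q$-periodic sequence $\chi(n+h)\overline{\chi(n)}$ (the archimedean factor $(1+h/n)^{it}\to1$). Decomposing the average over residues modulo $q$ and using total ergodicity of $w$ (so that $T^q$ is ergodic and the average of $w(n+h)\overline{w(n)}$ along each progression $n\equiv r\ (q)$ equals the full average $\lE_n w(n+h)\overline{w(n)}$), the problem reduces to the vanishing self-correlations of $w$ and the expression vanishes. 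If instead $f$ is aperiodic, I would invoke the structural result for measure preserving systems associated with bounded multiplicative functions to conclude that the correlation $\lE_n f(n+h)\overline{f(n)}\,\psi(n)$, with $\psi(n):=w(n+h)\overline{w(n)}$ a zero entropy weight, factorizes through the two-point correlation $\lE_n f(n+h)\overline{f(n)}$, which vanishes for aperiodic $f$; hence the whole expression is again $0$.

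The main obstacle is this last, aperiodic, case: it is exactly where the structural theorem is indispensable, as one needs the Furstenberg system of $f$ to be disjoint from (orthogonal to) zero entropy systems already at the level of products of two shifts of $f$, and not merely for $f$ itself. A secondary point requiring care is the reduction in the first paragraph: one must check that the passage to a completely multiplicative limit can be organized so that the weight is genuinely preserved in every finite partial sum. Finally, there are the standard but not entirely routine justifications that for pretentious $f$ the shifted products may be replaced, in logarithmic density, by their periodic models, and that the spectral second moment estimate may be run along the chosen convergence subsequence.
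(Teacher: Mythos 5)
Your overall architecture (average over the dilation parameter to pass to multiplicative functions, show the self-correlations of $f\cdot w$ vanish, and conclude unboundedness by a second-moment computation) matches the paper's, but two of your steps have real problems. The first is the reduction: no generalized limit of the dilates $d\mapsto a(dk)\overline{a(d)}$ produces a single completely multiplicative $f$, because that would require the limiting functional to be invariant under $d\mapsto dk$ for every $k$, and no ultrafilter on $(\N,\cdot)$ has this property (for a dilation-invariant $p$ one would need $A\in p\iff 2^{-1}A\in p$, which fails for $A=\{n:\lfloor\log_2 n\rfloor \text{ even}\}$). What the averaging over $d$ actually yields, via Bochner's theorem on $(\Q^+,\cdot)$ as in Proposition~\ref{P:Bochner}, is a probability measure $\sigma$ on $\CM$ and the bound $\sup_n\int_{\CM}\bigl|\sum_{k\le n}f(k)w(k)\bigr|^2\,d\sigma(f)<\infty$; one cannot extract from this a single $f$ with uniformly bounded weighted partial sums, since the exceptional sets depend on $n$. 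This gap is repairable because your correlation computation is pointwise in $f$ and the second-moment contradiction can be run under the integral (this is exactly Proposition~\ref{P:corrzero2}), but the reduction as you state it is not valid, and the single-$f$ statement you reduce to would not by itself imply the theorem.

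The more serious gap is your pretentious case. Pretentiousness of $f$ to $\chi(n)n^{it}$ means $\sum_p\bigl(1-\Re\bigl(f(p)\overline{\chi(p)}p^{-it}\bigr)\bigr)/p<\infty$; it does \emph{not} imply that $f(n+h)\overline{f(n)}$ agrees with $\chi(n+h)\overline{\chi(n)}$ outside a set of small logarithmic density. (Take $f(p)=\chi(p)p^{it}e^{i\theta_p}$ with $\sum_p\theta_p^2/p<\infty$ but $\sum_p\theta_p/p=\infty$: the accumulated phases make $f(n)\overline{\chi(n)}n^{-it}$ far from $1$ for typical $n$, and even the unweighted correlation $\lE_{n\in\N}f(n+h)\overline{f(n)}$ is given by an Euler product, not by the periodic model.) Evaluating $\lE_{n\in\N}f(n+h)\overline{f(n)}\,w(n+h)\overline{w(n)}$ for pretentious $f$ against an arbitrary bounded weight is precisely the hard part, and your sketch does not close it. The paper's key point is that no dichotomy is needed: Theorem~\ref{T:Disjoint'} applies to \emph{every} multiplicative $f\colon\N\to\mathbb{U}$, pretentious or not, and Proposition~\ref{P:Disjoint} decouples the correlation into $\lE_{n\in\N}f(n+h)\overline{f(n)}\cdot\lE_{n\in\N}w(n+h)\overline{w(n)}$, where the vanishing comes from the second factor, i.e.\ from the hypothesis on $w$ (this is Proposition~\ref{P:zero'}); the value of the $f$-correlation is irrelevant, so Tao's two-point Elliott theorem is never invoked at this level. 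Note also that total ergodicity and zero entropy of $w$ are used exactly to trigger this disjointness, not to average over residue classes as in your pretentious case; your aperiodic case, once restated as this decoupling, is already the entire proof.
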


\begin{remarks}

\hfill \begin{itemize}
\item As was the case in \cite{T16}, if $\mathcal{H}$ is an arbitrary  inner product space and  $a\colon \N\to \mathcal{H}$ is such that $\norm{a(k)}_{\mathcal{H}}=1$ for all $k\in \N$,  then our argument works without any change and shows that $$\sup_{d, n\in \N}\norm{\sum_{k=1}^n w(k) \, a(dk)}_{\mathcal{H}}=+\infty.$$
\item Using Theorem~\ref{T:mf1} below, it is straightforward to adapt the proof of  Theorem~\ref{T:EDP1} in order to get the following stronger conclusion: For $Q(k)=\prod_{j=1}^\ell (k+h_j)$, $k\in \N$, where  $\ell\in \N$, $h_1,\ldots, h_\ell\in \Z^+$, and $w$ is as before,  we have    for every sequence $a\colon\N\to \S$ that
$$	
\sup_{d, n\in \N}\Big|\sum_{k=1}^n a(dQ(k)) \, w(k)\Big|=+\infty.
$$
But our methods do not allow us to deal with the  non-weighted version  (where $w=1$) even
when $Q(k)=k(k+1)$, $k\in\N$.

\item The zero entropy assumption cannot be removed. To see this, let $a(k)=f(k)$ and  $w(k)=(-1)^k \overline{f(k)}$, $k\in \N$,  where $f\colon \N\to \{-1,1\}$ is any multiplicative function that satisfies the Elliott conjecture, in which case $w$ has vanishing
 self-correlations and is totally ergodic (in fact Bernoulli). Also, the assumption that the self-correlations of $w$ vanish cannot be removed. To see this, let $a=1$ and  $w(k)=e(k\alpha)$, $k\in \N$, where $\alpha$ is irrational. On the other hand,
it is not clear whether  the  assumption of total ergodicity can  be removed.
\end{itemize}
\end{remarks}
\begin{corollary}\label{C:EDP1}
	Let  $a\colon \N\to \S$ be a sequence, $\phi\colon\T\to \mathbb{U}$ be Riemann integrable with
$\int \phi=0$ and $\int |\phi|\neq 0$, and let $P\colon\R\to \T$ be a polynomial with degree at least $2$ and irrational leading coefficient.
 Then
$$
	\sup_{d,n \in \N}\Big|\sum_{k=1}^n a(dk)\,  \phi(P(k))\Big|=+\infty.
$$	
\end{corollary}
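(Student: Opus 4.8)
The plan is to derive the corollary from Theorem~\ref{T:EDP1} by checking that the weight $w(k):=\phi(P(k))$, $k\in\N$, satisfies all four hypotheses of that theorem: that it is non-null, totally ergodic, has zero entropy, and has vanishing self-correlations. The engine behind every verification is Weyl's theorem on equidistribution of polynomial sequences, together with the fact that ordinary Ces\`aro convergence of averages implies convergence of the corresponding logarithmic averages to the same limit, so that every $\lE$ below may be computed as an ordinary limit.

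First I would record a structural description of $w$. Writing $P(x)=cx^d+\cdots$ with $d=\deg P\geq 2$ and $c$ irrational, the sequence $(P(k)\bmod 1)_{k\in\N}$ is the first-coordinate projection of the orbit $(T^k x_0)_{k\in\N}$ of a unipotent affine transformation $T$ on a torus $\T^m$, started at a Haar-generic point $x_0$. Consequently $w(k)=\phi(\pi(T^k x_0))$ for a suitable projection $\pi$, and the measure-preserving system associated with $w$ is a factor of $(\T^m,\mathrm{Haar},T)$. Unipotent affine transformations of tori have zero entropy, and since zero entropy passes to factors, $w$ has zero entropy. For total ergodicity I would note that $T^r$ amounts to sampling $P$ along a progression $rk+s$; the leading coefficient of $P(rk+s)$ is $cr^d$, still irrational, so $(\T^m,T^r)$ is ergodic for every $r\geq 1$ (equivalently, the irrationality of $c$ forbids nontrivial rational eigenvalues), and total ergodicity descends to the factor.

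Next come the two quantitative conditions. Non-nullity is immediate: by equidistribution of $(P(k))$ and Riemann integrability of $|\phi|^2$ one gets $\lE_{k\in\N}|w(k)|^2=\int_\T|\phi|^2$, which is strictly positive because $\int|\phi|\neq 0$ forces $\phi$ to be nonzero on a set of positive measure. For the vanishing self-correlations I would fix $h\in\N$, study the pair sequence $k\mapsto(P(k+h),P(k))\in\T^2$, and show it is equidistributed; granting this, $\lE_{k\in\N}w(k+h)\overline{w(k)}=\int_{\T^2}\phi(x)\overline{\phi(y)}\,dx\,dy=\big|\int\phi\big|^2=0$. Equidistribution in $\T^2$ is checked through the Weyl criterion applied to $kP(\cdot+h)+\ell P(\cdot)$ for $(k,\ell)\in\Z^2\setminus\{0\}$: when $k+\ell\neq 0$ the top-degree coefficient is $(k+\ell)c$, which is irrational; when $k+\ell=0$ the combination is a nonzero integer multiple of $P(\cdot+h)-P(\cdot)$, a polynomial of degree $d-1\geq 1$ with leading coefficient $cdh$, again irrational. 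This is exactly the point where the hypothesis $\deg P\geq 2$ is essential and cannot be dropped: for $d=1$ the difference $P(k+h)-P(k)$ is constant, the pair sequence collapses onto a subtorus, and the self-correlations need not vanish, matching the remark that $e(k\alpha)$ fails to be a good weight.

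The main obstacle is the ergodic bookkeeping of the second paragraph: carefully exhibiting $w$ as a factor of an explicit zero-entropy, totally ergodic affine system, and confirming that the measure-preserving system attached to $w$ (built from logarithmic averages) really is that factor, so that the hypotheses of Theorem~\ref{T:EDP1} as phrased in Section~\ref{SS:ergodicnotation} genuinely apply. Once the system is identified, the equidistribution computations are routine applications of Weyl's theorem, and with all four hypotheses verified, Theorem~\ref{T:EDP1} yields the claimed unboundedness.
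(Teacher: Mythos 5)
Your proposal is correct and follows essentially the same route as the paper: the paper likewise verifies the four hypotheses of Theorem~\ref{T:EDP1} for $w(k)=\phi(P(k))$, obtaining zero entropy and total ergodicity by realizing $w$ as a factor of a unipotent affine system on a torus (Proposition~\ref{P:ex}), proving non-nullity from equidistribution and $\int|\phi|\neq 0$, and proving vanishing self-correlations via Weyl estimates together with $\int\phi=0$. The only cosmetic difference is that the paper handles Riemann integrable $\phi$ by approximating with trigonometric polynomials and then continuous functions, whereas you invoke equidistribution of the pair sequence $(P(k+h),P(k))$ in $\T^2$ directly; the content is the same.
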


 It follows that for $l\geq 2$  and $\alpha$ irrational, the sequence   $(e(k^l\alpha))_{k\in\N}$ and  the sequence that assigns values $-1, 0,$ or $1$ according to whether   $\{k^l\alpha\}$ is in the interval  $[0,1/3)$, $[1/3,2/3)$, or $[2/3,1)$,  are good weights.

The proof of Theorem~\ref{T:EDP1} has a few interesting features.  Unlike the proof of Theorem~\ref{T:Tao} in \cite{T16},  we are not using explicitly or implicitly results from \cite{MR15,MRT15,T15} on averages of multiplicative functions in short intervals,
and also we do not carry out a separate analysis in the case where the sequence $(a(k))_{k\in\N}$ is a pretentious multiplicative function.
 To compensate for this, our argument crucially uses  the following
 ergodic result that was proved in  \cite{FH19} using a combination of ergodic theory and number theory tools developed in
 \cite{FH18} and \cite{TT18}
  (the notions involved are defined in Section~\ref{S:ergodic}):
\begin{theorem}[F., Host \cite{FH19}]\label{T:Disjoint}
All  Furstenberg systems of  a multiplicative function with values on $\mathbb{U}$ are disjoint from all zero entropy totally ergodic systems.
\end{theorem}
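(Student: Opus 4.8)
The plan is to argue entirely within the joinings framework and to reduce disjointness to the vanishing of suitable logarithmically averaged correlations. Let $f\colon \N\to \mathbb{U}$ be multiplicative, let $(X,\mu,S)$ be one of its Furstenberg systems (so $X\subseteq \mathbb{U}^\Z$, $S$ is the shift, and $\mu$ is a weak-$*$ limit of the logarithmic empirical measures along the orbit of $(f(n))_{n\in\N}$), and let $(Y,\nu,T)$ be zero entropy and totally ergodic. Recall that disjointness is equivalent to the assertion that every joining $\lambda$ of the two systems equals $\mu\times\nu$; disintegrating $\lambda$ over its ergodic components and using that $\nu$ is ergodic, it suffices to treat ergodic joinings. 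Since $L^2(\mu)$ is generated by products of shifts of the zeroth coordinate function $\pi_0$, and since product measure is equivalent to the vanishing of $\int F\otimes G\,d\lambda$ for all $F$ and all mean-zero $G$, the theorem reduces to showing that for every $F=\prod_i \pi_0\circ S^{h_i}\cdot \prod_j \overline{\pi_0\circ S^{h_j'}}$ and every $G\in L^\infty(\nu)$ with $\int G\,d\nu=0$ we have $\int F\otimes G\,d\lambda=0$. Unwinding the correspondence principle, this is exactly the statement that
$$
\lE_{n\in\N}\Big(\prod_i f(n+h_i)\,\prod_j \overline{f(n+h_j')}\Big)\,G(T^n y)=0
$$
for the generic point $y$ producing the $Y$-marginal; so everything reduces to orthogonality, in logarithmic density, of products of shifts of a multiplicative function to every zero-mean observable of a zero entropy totally ergodic system.

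The engine for this orthogonality is the interaction between the multiplicative (dilation) symmetry of $f$ and the two hypotheses on $Y$. Following the entropy decrement method, I would fix a large $w$ and exploit that $f(pn)=f(p)f(n)$ for primes $p\le w$ with $(n,p)=1$; averaging the correlation over the dilations $n\mapsto pn$ — the device introduced by Tao \cite{T16} for the logarithmic Chowla conjecture — shows that the joint distribution is approximately invariant under the simultaneous map that multiplies the $X$-coordinate by $f(p)$ and applies $T^p$ to the $Y$-coordinate. The entropy decrement inequality, combined with the zero entropy of $Y$ (which forces the process $(G(T^ny))_{n}$ to be deterministic, so that the dilated copies create no new information), makes this approximate invariance survive in the limit. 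At this point I would invoke the Tao--Ter{\"a}v{\"a}inen structural results \cite{TT18} for logarithmically averaged correlations of multiplicative functions: a non-vanishing correlation of a product of shifts of $f$ forces $f$ to pretend to be a twisted Archimedean character $\chi(n)\,n^{it}$ for some Dirichlet character $\chi$ and some $t\in\R$, so that the only surviving contribution comes from such an explicit resonance.

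It remains to rule out this resonance against a zero-mean observable of $Y$, and here both hypotheses on $Y$ are used decisively. The Dirichlet part $\chi$, of conductor $q$, amounts to sampling $G(T^n y)$ along the arithmetic progressions $n\equiv a \pmod q$; total ergodicity of $(Y,\nu,T)$ means $T^q$ is ergodic, so $G(T^ny)$ equidistributes along each such progression to its mean $\int G\,d\nu=0$, annihilating this obstruction — this is precisely the step that fails for merely ergodic $Y$. The Archimedean factor $n^{it}=e\!\left(\tfrac{t\log n}{2\pi}\right)$ is slowly varying, hence contributes no nontrivial eigenvalue; it is removed by summation by parts using the zero mean and ergodicity of $G$. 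Combining the two, every admissible resonance is incompatible with $\int G\,d\nu=0$, the displayed correlation vanishes, and hence $\int F\otimes G\,d\lambda=0$, giving $\lambda=\mu\times\nu$.

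The main obstacle, carrying essentially all of the analytic depth, is the middle step: making the dilation-averaging rigorous through the entropy decrement argument and, on top of it, the Tao--Ter{\"a}v{\"a}inen classification of non-vanishing correlations of \emph{products of shifts} of $f$, not merely the two-point case. Two further points need care. First, one must arrange that the Furstenberg systems in play can be taken ergodic, or at least that the ergodic decomposition is compatible with the dilation symmetry, since a priori logarithmic Furstenberg systems of multiplicative functions need not be ergodic. Second, the reduction from arbitrary joinings to the displayed correlations requires that products of shifts of $\pi_0$ generate $L^2(\mu)$ and that the generic point $y$ can be chosen simultaneously for the $Y$-marginal and all finitely many shifts; this is where the structural input of \cite{FH18} — the dilation invariance (``strong stationarity'') of these systems — does the organizing work, upgrading the pointwise orthogonality statements into a genuine disjointness of systems.
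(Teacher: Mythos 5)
First, a point of order: the paper contains no proof of Theorem~\ref{T:Disjoint} at all --- it is quoted from \cite{FH19}, and the only indication of the argument is the remark (before Theorem~\ref{T:DisjointSeveral}) that it combines the structural result of \cite[Theorem~1.5]{FH19} with the purely ergodic disjointness statement of \cite[Proposition~3.12]{FH18}. Measured against that route, your proposal has a genuine gap at its very first reduction. The identity you assert, that $\int F\otimes G\,d\lambda$ equals $\lE_{n\in\N}\big(\prod_i f(n+h_i)\prod_j \overline{f(n+h_j')}\big)\,G(T^ny)$, is valid only for the special ``orbital'' joinings obtained as weak-star limits of logarithmic empirical measures along the orbit of the pair $(f,y)$ --- exactly the joinings built in the proof of Proposition~\ref{P:Disjoint}, where the implication runs in the opposite direction (disjointness implies decoupling of correlations). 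An arbitrary ergodic joining $\lambda$ of a Furstenberg system $(X,\mu,S)$ with $(Y,\nu,T)$ is generated by $\lambda$-generic pairs $(x,y)$, with Ces\`aro averaging, in which $x$ is an abstract $\mu$-generic point of $\mathbb{U}^\Z$; such an $x$ retains none of the multiplicativity of $f$, so your dilation/entropy-decrement step, which needs $f(pn)=f(p)f(n)$, has nothing to act on. This is precisely why \cite{FH19} proves a structure theorem for the measure $\mu$ itself --- constraining all of its joinings at once --- rather than an orthogonality statement about the sequence $f$; your closing appeal to strong stationarity to ``upgrade pointwise orthogonality into a genuine disjointness of systems'' names the missing theorem instead of proving it.

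Second, the engine of your middle step is not available. The classification ``a non-vanishing logarithmically averaged correlation of a product of shifts of $f$ forces $f$ to pretend to be $\chi(n)n^{it}$'' is known only for two-point correlations (Tao \cite{T15}); for three or more shifts such a dichotomy is essentially the open logarithmic Elliott conjecture, as the paper itself emphasizes when discussing $\sum_{k\le n} f(k)f(k+1)$. What \cite{TT18} actually supplies is an approximate dilation invariance of the higher-order correlation sequences, and \cite{FH19} uses this (via the identities in \cite[Theorem~3.1]{FH19}) to place the Furstenberg systems of bounded multiplicative functions inside the class of strongly stationary systems studied in \cite{FH18}, whose ergodic structure makes disjointness from zero entropy totally ergodic systems a soft consequence. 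To your credit, the final step of your sketch --- total ergodicity absorbing the Dirichlet-character (rational spectrum) obstruction, zero entropy handling the Bernoulli-type randomness --- correctly locates where the two hypotheses on $(Y,\nu,T)$ must enter, and mirrors their role in \cite[Proposition~3.12]{FH18}; but since it rests on the two unsupported steps above, the proposal does not constitute a proof.
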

To get a sense of why  Theorem~\ref{T:Disjoint} is useful, we note that it implies (via Proposition~\ref{P:zero'} below)  that if $w$ is a totally ergodic sequence with zero entropy and
$f\colon \N\to\mathbb{U}$  is a multiplicative function, then the self-correlations of the sequence $f\cdot w$ split
into a product of the self-correlations of $f$ and the self-correlations of $w$. Hence, if we assume that $w$ has vanishing self-correlations, then the same holds for $f\cdot w$,
and this property implies
 Theorem~\ref{T:EDP1} (see Proposition~\ref{P:corrzero2}).

Lastly, we give examples of good weights that are given by random sequences.
 The first result applies to  independent symmetric random variables and its proof  is rather elementary.
	\begin{theorem}\label{T:Random1}
	Let $(X_k(\omega))_{k\in\N}$ be a sequence of independent random variables with $\P(X_k=-1)=\P(X_k=1)=\frac{1}{2}$, $k\in\N$, and $a\colon \N\to \mathbb{U}$ be a non-null sequence.
	Then
	$\omega$-almost surely the sequence $(a(k)\, X_k(\omega))_{k\in\N}$ is a good weight for the Erd\H{o}s discrepancy problem.
\end{theorem}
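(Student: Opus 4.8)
The plan is to show that the ``bad'' event $B=\{\omega: (a(k)\,X_k(\omega))_k\text{ is not a good weight}\}$ is null. Write $w_\omega(k)=a(k)\,X_k(\omega)$. The single clean fact I would exploit is that $(X_k)$ is orthonormal in $L^2(\Omega)$: for every \emph{fixed} $b\colon\N\to\S$, every $d$, and every $n$, independence together with $|b|=1$ gives $\E\big|\sum_{k=1}^n b(dk)\,w_\omega(k)\big|^2=\sum_{k=1}^n|a(k)|^2$. Non-nullness forces $\sum_k|a(k)|^2=+\infty$ (otherwise $\lE_{n\in[N]}|a(n)|^2\to 0$), so this second moment tends to $\infty$ with $n$ already for $d=1$. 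Hence, for each fixed $b$, the series $\sum_k b(k)a(k)X_k$ has divergent variance and, by the Kolmogorov three--series theorem (or a direct martingale/concentration estimate), its partial sums are $\omega$-almost surely unbounded; a fortiori $\sup_{d,n}\big|\sum_{k\le n}b(dk)w_\omega(k)\big|=+\infty$ almost surely. This settles the \emph{non-adaptive} version of the statement.

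The difficulty, and the whole point, is to pass from ``for each fixed $b$, almost surely'' to ``almost surely, for every $b$'', since in the definition of a good weight the test sequence $b$ may be chosen depending on $\omega$. Two features make a naive union bound fail. First, the set of admissible $b$ is the uncountable compact space $\S^\N$, and boundedness of the partial sums is not preserved under the (product-topology) limits that a countable dense family would supply. Second, and more seriously, the weight sits at position $k$, so \emph{every} common difference $d$ re-uses the \emph{same} random bits $(X_k)$; enlarging the family of $d$'s therefore supplies no fresh independence, and for any single $d$ an adversary who sees $\omega$ can trivially keep $\sum_{k\le n}b(dk)w_\omega(k)$ inside a bounded disc by choosing the phases $b(dk)$ to cancel the running sum. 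I regard overcoming this adaptive adversary as the main obstacle.

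To organise the argument I would first note that being a good weight is a \emph{tail event} for the i.i.d.\ sequence $(X_k)$: altering finitely many $X_k$ changes each partial sum by a bounded amount and so cannot affect whether a supremum is infinite. By Kolmogorov's zero--one law the probability in question is $0$ or $1$, so it suffices to prove it is positive. I would then try to defeat the adaptive $b$ by a multiscale scheme: on a dyadic block $I=(N,2N]$ the increments $\sum_{k\in I}b(dk)a(k)X_k$ depend only on the independent bits $(X_k)_{k\in I}$, so after replacing the finitely many relevant phases $b(m)$ by a $\delta$-net one can apply a small-ball (anti-concentration) estimate for Rademacher sums to bound the chance that such an increment lies in a disc of radius $2C$, then take a union bound over the net and Borel--Cantelli over the blocks.

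The hard part is precisely the bookkeeping in this last step. Within one block the adversary controls at least as many free phases $b(m)$ as there are random bits, so block-wise anti-concentration alone cannot beat an adaptive choice of $b$: the net is too large. What must save the argument is that a single value $b(m)$ with $m=dk=d'k'$ is shared among many common differences, and this overlap---the rigidity of homogeneous arithmetic progressions encapsulated by Tao's Theorem~\ref{T:Tao}---reduces the effective number of free parameters below the available energy $\sum_{k\le n}|a(k)|^2$. Coupling the deterministic HAP rigidity of Theorem~\ref{T:Tao} with the Rademacher anti-concentration, so that the randomness turns the bounded-discrepancy constraints into an event of vanishing probability uniformly over the net, is the technical heart of the proof and the step I expect to require the most care.
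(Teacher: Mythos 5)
Your proposal correctly handles the non-adaptive case (fixed test sequence $b$) and correctly isolates the real difficulty --- that the test sequence in the definition of a good weight may be chosen adaptively, depending on $\omega$, from an uncountable family --- but it does not resolve that difficulty; indeed you say yourself that within a block ``the net is too large'' and that the coupling of Theorem~\ref{T:Tao} with anti-concentration is ``the step I expect to require the most care.'' That step is the entire theorem, and the route you gesture at (a $\delta$-net over all phase sequences $b$ on a dyadic block, rescued by some unspecified ``HAP rigidity'' extracted from Theorem~\ref{T:Tao}) is not viable as described: Theorem~\ref{T:Tao} is an unboundedness statement, not a dimension-reduction device, and there is no indication how it would shrink the effective parameter count of an arbitrary $b\colon\N\to\S$ below the number of random bits in a block. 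The zero--one law observation, while correct, gives no purchase on this.

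The paper closes the gap by a two-stage reduction that your sketch is missing. First, Proposition~\ref{P:Reduction} (via multiplicative F\o lner averaging in $d$ and Bochner's theorem on the group $\Q^+$) reduces the claim for \emph{arbitrary} $a\colon\N\to\S$ to a claim about \emph{completely multiplicative} functions $f\in\CM$, integrated against a measure $\sigma$; combined with Proposition~\ref{P:corrzero2}, it suffices to show that $\omega$-almost surely the logarithmic self-correlations of $f\cdot w_\omega$ vanish for \emph{all} multiplicative $f$ simultaneously. Second --- and this is exactly the point where your net becomes affordable --- a multiplicative function restricted to $[N]$ is determined by its values at prime powers, of which there are only $O(N/\log N)$, so the $\varepsilon_N$-net $\mathcal{M}_{\ell,\varepsilon_N,N}$ has cardinality $e^{O(N\log\log N/\log N)}$ (Lemmas~\ref{L:close} and \ref{L:count}), which is beaten by the Bernstein concentration bound $e^{-\frac14\delta_N^2N}$ with $\delta_N=(\log N)^{-1/3}$; Borel--Cantelli then gives uniformity over all of $\CM$ (Theorem~\ref{T:Random3}), after splitting $\N$ into two sets to make the products $X_{n+h}X_n$ independent. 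Note also that the paper uses \emph{concentration} of averages around zero (to verify the hypothesis of Proposition~\ref{P:corrzero}), not small-ball \emph{anti-concentration} of increments; your proposed mechanism points in the opposite direction. To repair your argument you would essentially have to rediscover the Bochner reduction to multiplicative functions, since that is what replaces the uncountable, full-entropy family $\S^\N$ by a family of subexponential metric entropy.
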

The second result applies  to  independent random variables that are not necessarily symmetric as long as they take a fixed non-zero complex value not too rarely. Its proof, due to M.~Kolountzakis, is simple, but
 makes essential use of Theorem~\ref{T:Tao} (via the criterion given in Lemma~\ref{L:goodEDP} below).

	\begin{theorem}\label{T:Random1'}
Let $(X_k(\omega))_{k\in\N}$ be a sequence of independent, complex valued,   random variables.  Suppose that for some  $c\in \C\setminus\{0\}$ the sequence  $\rho_k:=\P(X_k=c)$, $k\in\N$, is decreasing and satisfies $\sum_{k\in \N}\rho_k^l=+\infty$ for every $l\in \N$.
Then
	$\omega$-almost surely the sequence $(X_k(\omega))_{k\in\N}$ is a good weight for the Erd\H{o}s discrepancy problem.
	\end{theorem}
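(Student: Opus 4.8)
The plan is to deduce the statement from a deterministic criterion for good weights that packages Theorem~\ref{T:Tao}, namely Lemma~\ref{L:goodEDP}, together with a Borel--Cantelli argument that produces the combinatorial structure this criterion requires. Concretely, I would aim to show that for $\omega$ in a set of full measure the sequence $w=(X_k(\omega))_{k\in\N}$ takes the value $c$ on arbitrarily long runs of consecutive integers — for every $l\in\N$ there are infinitely many $m$ with $X_{m+1}(\omega)=\cdots=X_{m+l}(\omega)=c$ — and then invoke Lemma~\ref{L:goodEDP} to conclude that such a $w$ is a good weight. The hypotheses that $\rho_k$ is decreasing and that $\sum_k\rho_k^l=+\infty$ for every $l$ are precisely calibrated to yield runs of every length.

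For the probabilistic core, fix $l\in\N$ and split $\N$ into the consecutive blocks $B_i=\{(i-1)l+1,\ldots,il\}$. Let $A_i$ be the event that $X_k=c$ for all $k\in B_i$. Since the $X_k$ are independent and the blocks are disjoint, the events $A_i$ are independent, and by monotonicity of $\rho_k$,
$$\P(A_i)=\prod_{k\in B_i}\rho_k\ge \rho_{il}^{\,l}.$$
A comparison using that $k\mapsto\rho_k^l$ is nonincreasing turns $\sum_k\rho_k^l=+\infty$ into $\sum_i\rho_{il}^{\,l}=+\infty$, so $\sum_i\P(A_i)=+\infty$ and the second Borel--Cantelli lemma gives $\P(A_i\text{ infinitely often})=1$; that is, almost surely there are infinitely many disjoint runs of $l$ consecutive $c$'s. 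Intersecting these full-measure events over the countably many $l\in\N$ yields the desired structure almost surely.

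The main obstacle is the reduction to Theorem~\ref{T:Tao} carried out in Lemma~\ref{L:goodEDP}, i.e.\ showing that the mere presence of arbitrarily long runs of a fixed value $c$ forces $\sup_{d,n}\big|\sum_{k=1}^n a(dk)\,w(k)\big|=+\infty$ for every $a\colon\N\to\S$ simultaneously. Writing $S(n)=\sum_{k=1}^n a(dk)w(k)$ and using a run $\{M+1,\ldots,M+L\}$ on which $w\equiv c$, one has $S(M+L)-S(M)=c\sum_{k=M+1}^{M+L}a(dk)$, so it suffices to find a run and a dilation $d$ making $\big|\sum_{k=M+1}^{M+L}a(dk)\big|$ large. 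The delicate point is that this quantity equals $S_a(d,M+L)-S_a(d,M)$ for the homogeneous partial sums $S_a(d,n)=\sum_{k=1}^n a(dk)$, and one must match the (random, and in principle adversarially known) run locations and lengths to the scales and dilations on which Theorem~\ref{T:Tao} forces these partial sums to be large. This is exactly where Tao's theorem is indispensable: for any single dilation the adversary could arrange cancellation, and only the full strength of \eqref{E:EDP}, valid for all $a$ at once, rules this out. I expect this uniform matching to be the crux of the argument, with the probabilistic and measure-theoretic bookkeeping (the event in question lies in the tail $\sigma$-algebra of the independent sequence $(X_k)$, hence has probability $0$ or $1$) being comparatively routine.
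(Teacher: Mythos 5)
There is a genuine gap, and it is precisely at the point you flag as ``the crux'': the deterministic structure your Borel--Cantelli argument produces --- arbitrarily long runs of consecutive integers on which $w\equiv c$ --- is \emph{not} sufficient to make $w$ a good weight. The paper's remark following Lemma~\ref{L:goodEDP} gives an explicit counterexample: take $w$ to be the indicator of a union of arbitrarily long intervals of even length and $a$ a completely multiplicative function alternating in sign on those intervals; then $\sum_{k=1}^n a(dk)\,w(k)=a(d)\sum_{k=1}^n a(k)\,w(k)$ stays bounded by $1$ uniformly in $d,n$. The obstruction is structural: if $w\equiv c$ on $\{M+1,\ldots,M+L\}$, then $S_d(M+L)-S_d(M)=c\sum_{k=M+1}^{M+L}a(dk)$, and to make this large you must apply Theorem~\ref{T:Taofinite} to the block of values $a(dM+1),\ldots,a(dM+Lm)$ of $a$ near $dM$; the dilation $d$ it returns is not known in advance, and the $k$-window on which you then need $w\equiv c$ is $\{N/d+1,\ldots,N/d+n\}$ for a suitable starting point $N$ --- a \emph{different} interval for each candidate $d$. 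A single run cannot serve all dilations simultaneously, and an adversarial $a$ can exploit this, as the counterexample shows.

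The fix, which is the actual content of Lemma~\ref{L:goodEDP}, is to demand that $w$ equal $c$ on the whole family of intervals $\bigl\{r\frac{m!}{i}+j: j\in[m]\bigr\}$ for \emph{all} $i\in[m]$ simultaneously (the factorial alignment guarantees $r\frac{m!}{i}\in\N$ for every $i\le m$, so whichever $d\le m$ Theorem~\ref{T:Taofinite} produces, the relevant window is covered). Your probabilistic step must therefore be upgraded to show that, for each fixed $m$, almost surely there are infinitely many $r$ with $X_{r\frac{m!}{i}+j}(\omega)=c$ for all $i,j\in[m]$; this is still a second Borel--Cantelli argument over $r\in m!\N+1$ (one checks the $m^2$ relevant indices are distinct, so the events are independent), and it is exactly here that the hypotheses are calibrated: monotonicity of $\rho_k$ gives $\prod_{i,j}\rho_{r\frac{m!}{i}+j}\ge\rho_{r(m+1)!}^{\,m^2}$, and divergence of $\sum_k\rho_k^{m^2}$ together with monotonicity gives $\sum_r\rho_{r(m+1)!}^{\,m^2}=+\infty$. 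Your block-run computation only needs $\sum_k\rho_k^{\,l}=+\infty$ for runs of length $l$, which is why it proves a strictly weaker (and insufficient) conclusion; the need for divergence of \emph{all} powers $\rho_k^{\,l}$ is a signal that the target events involve $\sim m^2$ constraints, not $\sim l$.
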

\begin{remark}
The assumption of monotonicity cannot be removed. To see this, take $\P(X_k=1)=1$ if $k$ is prime, and $\P(X_k=0)=1$ for all other $k\in \N$, and let $a\colon \N\to \{-1, 1\}$ be a completely multiplicative function that is equal to $(-1)^n$ on the $n$-th prime. Then $\omega$-almost surely we have $\sup_{d, n\in \N}\big|\sum_{k=1}^n a(dk) \, X_{k}(\omega)\big|\leq 1$.
\end{remark}
If we take $c=1$ and  decreasing  $\rho_k$ such that  $\rho_k\geq  \frac{1}{\log{k}}$  and
 $\P(X_k=0)=1-\rho_k$ for  $k\geq 2$, then Theorem~\ref{T:Random1'} applies, and gives  that the indicator functions of certain sparse random subsets of the integers are good weights for the Erd\H{o}s discrepancy problem.

\subsection{Results related to weighted sums of multiplicative functions}\label{S:WS}
As was the case in the proof of Theorem~\ref{T:Tao} in  \cite{T16}, the unboundedness of weighted discrepancy sums for arbitrary unit modulus sequences
follows from similar unboundedness properties of unit modulus  completely multiplicative functions. We state next some related results that are of independent interest.
 \begin{theorem}\label{T:mf1}
	Let  $f\colon \N\to \mathbb{U}$ be a  non-null multiplicative function and   $w\colon \N\to \mathbb{U}$   be non-null,  totally ergodic,  with   zero entropy, and  vanishing self-correlations.
 Then
\begin{equation}\label{E:fj'}
	 \sup_{n\in  \N}\Big|\sum_{k=1}^n f(k)\,  w(k)\Big|=+\infty.
\end{equation}
In fact, the following stronger property holds:  If  $w$ is as before,  $f_1,\ldots, f_\ell\colon \N\to \mathbb{U}$ are multiplicative functions, and   $h_1,\ldots, h_\ell\in \Z^+$ are such that  the sequence
$( \prod_{j=1}^\ell f_j(k+h_j))_{k\in\N}$ is non-null, then we have
\begin{equation}\label{E:fj''}
\sup_{n\in \N}\Big|\sum_{k=1}^n \prod_{j=1}^\ell f_j(k+h_j)\,  w(k)\Big|=+\infty.
\end{equation}
\end{theorem}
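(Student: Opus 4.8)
The plan is to deduce both \eqref{E:fj'} and \eqref{E:fj''} from a single principle of a purely correlation-theoretic nature: \emph{a non-null sequence with vanishing self-correlations cannot have bounded partial sums}. Setting $b(k):=\prod_{j=1}^\ell f_j(k+h_j)\, w(k)$ (the first assertion being the case $\ell=1$), it then suffices to verify that $b$ is non-null and that $\lE_{n\in\N}\, b(n+h)\overline{b(n)}=0$ for every $h\in\N$; the supremum in \eqref{E:fj''} is forced to be infinite. The ergodic hypotheses on $w$ enter only in establishing these two properties of $b$.

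For the principle itself I would argue by contradiction, via a van der Corput estimate. Suppose $|S_N|\le M$ for all $N$, where $S_N:=\sum_{k=1}^N b(k)$. Since $\frac1H\sum_{h=1}^H b(n+h)=\frac1H(S_{n+H}-S_n)$ has modulus at most $2M/H$, we get for every $H$ and $N$
\begin{equation*}
\lE_{n\in[N]}\Big|\frac1H\sum_{h=1}^H b(n+h)\Big|^2\le \frac{4M^2}{H^2}.
\end{equation*}
Expanding the square yields $\frac1{H^2}\sum_{h_1,h_2=1}^H \lE_{n\in[N]}\, b(n+h_1)\overline{b(n+h_2)}$. As $N\to\infty$, the off-diagonal terms ($h_1\ne h_2$) are, after a harmless shift (logarithmic averages being asymptotically shift invariant), the self-correlations $\lE_{n}\, b(n+(h_1-h_2))\overline{b(n)}$, which vanish by assumption, while each of the $H$ diagonal terms has $\liminf$ equal to the non-null constant $c:=\liminf_N \lE_{n\in[N]}|b(n)|^2>0$. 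Hence $\liminf_N$ of the left-hand side is at least $c/H$, and comparison with the upper bound forces $c\le 4M^2/H$ for every $H$, which is absurd. This is the mechanism underlying Proposition~\ref{P:corrzero2}.

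It remains to verify the two hypotheses for $b$, and here Theorem~\ref{T:Disjoint} is used. It guarantees that the Furstenberg system of the multiplicative function (respectively, for \eqref{E:fj''}, the joint Furstenberg system of the tuple $(f_1,\dots,f_\ell)$) is disjoint from the zero entropy totally ergodic system attached to $w$. As recorded in Proposition~\ref{P:zero'}, this disjointness makes all correlations split:
\begin{equation*}
\lE_{n}\, b(n+h)\overline{b(n)}=\Big(\lE_{n}\textstyle\prod_j f_j(n+h+h_j)\,\overline{\prod_j f_j(n+h_j)}\Big)\cdot\Big(\lE_{n}\, w(n+h)\overline{w(n)}\Big)
\end{equation*}
for every $h\ge 0$. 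For $h\ge1$ the second factor is $0$ because $w$ has vanishing self-correlations, which yields the vanishing self-correlations of $b$; for $h=0$ the identity reads $\lE_n|b(n)|^2=\big(\lE_n|\prod_j f_j(n+h_j)|^2\big)\big(\lE_n|w(n)|^2\big)$, and both factors are positive by the non-nullity of the relevant product and of $w$, which yields the non-nullity of $b$. Feeding these two facts into the principle of the previous paragraph completes the proof.

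The step I expect to be the main obstacle is the correlation splitting for \eqref{E:fj''}. Theorem~\ref{T:Disjoint} is phrased for a single multiplicative function, so to handle $\prod_{j}f_j(k+h_j)$ I must package the tuple $(f_1,\dots,f_\ell)$ as a single $\mathbb{U}^\ell$-valued object, check that its Furstenberg systems still fall within the scope of Theorem~\ref{T:Disjoint} so that disjointness from the system of $w$ persists, and confirm that the weak-$*$ limits defining the various correlations can be extracted along one common subsequence, so that Proposition~\ref{P:zero'} applies simultaneously to all shifts $h$. Once this is in place, the non-nullity of $b$ is immediate from the $h=0$ instance, which is precisely why it is convenient to keep that case inside the factorization rather than to argue it separately.
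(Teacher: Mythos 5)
Your proposal is correct and follows essentially the same route as the paper: reduce to the principle that a non-null sequence with vanishing (logarithmic) self-correlations has unbounded partial sums (Proposition~\ref{P:corrzero}), then verify both hypotheses for $b(k)=\prod_j f_j(k+h_j)\,w(k)$ via the disjointness-induced splitting of correlations (Propositions~\ref{P:Disjoint} and \ref{P:zero'}), with the $h=0$ case giving non-nullity. The obstacle you flag at the end is exactly what the paper resolves by upgrading Theorem~\ref{T:Disjoint} to Theorem~\ref{T:DisjointSeveral}, which asserts disjointness for \emph{joint} Furstenberg systems of arbitrary finite collections of multiplicative functions.
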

\begin{remark}
 Note that for $w=1$ although   \eqref{E:fj'} holds for all completely multiplicative functions with values on $\S$, it fails for some non-null multiplicative functions with  values on $\mathbb{U}$. For instance it  fails for  $f(k)=(-1)^{k+1}$, $k\in\N$, and for all non-trivial Dirichlet characters.
\end{remark}

Regarding the non-weighted version of \eqref{E:fj''}, not much is known for $\ell\geq 2$. For instance,  it is not  known whether for every completely multiplicative function $f\colon \N\to \S$  we have
$$
\sup_{n\in\N}\Big|\sum_{k=1}^n f(k)\, f(k+1)\Big|=+\infty.
$$
This problem
was raised by J.~Ter\"av\"ainen and A.~Klurman, who  remarked that it is not even clear how to prove that
$$
\limsup_{n\to \infty}\Big|\sum_{k=1}^n \lambda(k)\, \lambda(k+1)\Big|\geq 5
$$
where $\lambda$ is the Liouville function.
On the other hand, it is an immediate consequence of the next corollary, that if $f\colon \N\to \S$ is a multiplicative function, $l\geq 2$, and $\alpha$ is irrational, then   we have
$$
\sup_{n\in\N}\Big|\sum_{k=1}^n f(k)\, f(k+1) \, e(k^l \alpha)\Big|=+\infty.
$$

\begin{corollary}\label{C:mf1}
	Let $\phi\colon\T\to \mathbb{U}$ be a Riemann integrable function with
$\int \phi=0$ and $\int |\phi|\neq 0$,
 and $P\colon\R\to \T$  be a polynomial with degree at least $2$ and irrational  leading  coefficient. Then for all multiplicative functions
  $f_1,\ldots, f_\ell\colon \N\to \mathbb{U}$ and   $h_1,\ldots, h_\ell\in \Z^+$ such that  the sequence
$( \prod_{j=1}^\ell f_j(k+h_j))_{k\in\N}$ is non-null, we have
$$
\sup_{n\in \N}\Big|\sum_{k=1}^n \prod_{j=1}^\ell f_j(k+h_j)\,  \phi(P(k))\Big|=+\infty.
$$
\end{corollary}

 Regarding weights given by random $\pm 1$ sequences, we have the following result:
\begin{theorem}\label{T:Random2}
Let $(X_k(\omega))_{k\in\N}$ be a sequence of independent random variables with $\P(X_k=-1)=\P(X_k=1)=\frac{1}{2}$, $k\in\N$.
Then
$\omega$-almost surely the following holds: For every  $\ell\in \N$, all     multiplicative functions $f_1,\ldots, f_\ell \colon \N\to \mathbb{U}$, and  $h_1,\ldots, h_\ell \in\Z^+$
 such that  the sequence
$( \prod_{j=1}^\ell f_j(k+h_j))_{k\in\N}$ is non-null, we have
	\begin{equation}\label{E:Xn}
	\sup_{n\in  \N}\Big|\sum_{k=1}^n \prod_{j=1}^\ell f_j(k+h_j)\,  X_k(\omega)\Big|=+\infty.
	\end{equation}
	 \end{theorem}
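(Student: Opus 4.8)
The plan is to reduce, exactly as in the statement, to controlling for each admissible datum $(\ell,f_1,\dots,f_\ell,h_1,\dots,h_\ell)$ the partial sums of the single bounded sequence $g(k):=\prod_{j=1}^\ell f_j(k+h_j)\in\mathbb{U}$; that is, to show that $\omega$-almost surely, simultaneously for every such $g$, one has $\sup_n\big|\sum_{k\le n} g(k)\,X_k(\omega)\big|=+\infty$. First I would record the elementary single-$g$ fact. For a fixed $g$ the increments $g(k)X_k$ are independent, mean zero, and bounded by $1$ in modulus, so $S_n:=\sum_{k\le n}g(k)X_k$ is a complex martingale with bounded increments; non-nullness forces $\sum_k|g(k)|^2=+\infty$ (a convergent series would make the logarithmic averages of $|g|^2$ tend to $0$), so its quadratic variation is infinite and Kolmogorov's theorem, applied to the real and imaginary parts, gives $\sup_n|S_n|=+\infty$ almost surely. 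An optional-stopping $L^2$ estimate also yields the quantitative companion $\P\big(\sup_{n\le N}|S_n|\le M\big)\le \frac{2(M+1)^2}{\sigma_N^2}$, where $\sigma_N^2:=\sum_{k\le N}|g(k)|^2$.

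The whole content is to upgrade this per-$g$ statement, valid off a $g$-dependent null set, to one null set that works for all multiplicative $f_1,\dots,f_\ell$ at once. The parameters $\ell$, the $h_j$, and the threshold $M$ range over countable sets and cause no trouble, but the $f_j$ form an uncountable family, and this uniformity is genuinely necessary: for an arbitrary $\omega$-dependent sequence the conclusion is false, since $g(k):=(-1)^k\overline{X_k(\omega)}$ is non-null yet makes $S_n=\sum_{k\le n}(-1)^k$ bounded, so the argument \emph{must} exploit multiplicativity to curb the adversary. This also rules out the naive route. Covering the restrictions $g|_{[1,N]}$ by a net fine enough to preserve boundedness of the partial sums (scale $\sim 1/N$ in an $\ell^1$-average) requires $e^{\Theta(N)}$ points: although $g|_{[1,N]}$ is determined by the values of the $f_j$ at the $\sim N/\log N$ primes up to $N+\max_j h_j$, the fine precision needed reinstates the lost factor $\log N$. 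This covering cost matches, and for large $M$ defeats, the confinement probability $e^{-c\sigma_N^2/M^2}$, so no union bound can be made uniform in $M$.

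Consequently I would argue by contradiction in a way that uses the multiplicative structure rather than counting. Assuming the conclusion fails, for some fixed $\ell,(h_j),M$ there is a positive-probability event $E$ on which an admissible non-null $g=g_\omega$ with $\sup_n|S_n(g_\omega,\omega)|\le M$ exists; since the set of multiplicative functions $\N\to\mathbb{U}$ is compact in the product topology, measurable selection produces a measurable $\omega\mapsto(f_{j,\omega})_j$ realizing this. The heuristic to be made rigorous is that a multiplicative function has only its $\sim N/\log N$ prime values as free parameters, whereas $S_n$ sees the $N$ independent signs $X_k$: pinning the prime values $f_{j,\omega}(p)$ so as to cancel the walk along the primes forces the values at composite $k$ to equal the prescribed products $\prod_{p\mid k}f_{j,\omega}(p)^{a_p}$, and these products cannot track the randomness carried by the composite indices, so the sub-walk over composite $k$ is unbounded, contradicting $\sup_n|S_n|\le M$.

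The crux, and the step I expect to be hardest, is that the selected $g_\omega$ is a priori $\sigma(X)$-measurable, so the randomness at composite indices is not literally independent of $g_\omega$ and a clean decoupling is unavailable. I would extract genuine independence by restricting the sum to multiples $k=qm$ of primes $q$ in a dyadic window $[Q,2Q]$ with $Q$ slightly below $\sqrt N$: for distinct such $q$ the index sets $\{qm\}_{m}$ are essentially disjoint (a common value would be divisible by two primes exceeding $\sqrt N$, hence would exceed $N$), so the blocks $f_{j,\omega}(q)\sum_{m}f_{j,\omega}(m)\,X_{qm}$ are conditionally near-orthogonal given the values of the $f_{j,\omega}$ away from $q$. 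A conditional second-moment estimate for these blocks, summed over the $\sim\sqrt N/\log N$ primes $q\in[Q,2Q]$, would force $\frac1N\sum_{n\le N}|S_n|^2$ to grow, contradicting the bound $M^2$. Here the shifts $h_j\neq 0$ and the $\mathbb{U}$-valued (rather than $\S$-valued) factors add only bookkeeping—the factorization $f_j(k+h_j)=f_j(q)\,f_j\big((k+h_j)/q\big)$ must be tracked through the shift, and the moduli $|f_j|$, controlled by non-nullness, enter only through $\sigma_N^2$—while the essential mechanism is the mismatch between the low complexity of a multiplicative function and the entropy of the random signs.
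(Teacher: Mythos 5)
There is a genuine gap, and it sits exactly where you say you expect the hardest step to be. Your proposed contradiction argument selects a measurable family $\omega\mapsto(f_{j,\omega})$ and then tries to decouple the walk over multiples of primes $q\in[Q,2Q]$ from the signs $X_{qm}$; but since $f_{j,\omega}(q)$ and $f_{j,\omega}(m)$ are both $\sigma(X)$-measurable functions of the \emph{entire} realization, conditioning ``on the values of $f_{j,\omega}$ away from $q$'' does not make the blocks $f_{j,\omega}(q)\sum_m f_{j,\omega}(m)X_{qm}$ near-orthogonal, and no conditional second-moment estimate is actually derived. You name the obstruction and then assert it can be overcome, which is not a proof; as your own example $g(k)=(-1)^k\overline{X_k(\omega)}$ shows, some quantitative use of the low complexity of multiplicative functions is mandatory, and your sketch never supplies one.

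Moreover, you dismiss prematurely the very route the paper takes. You are right that a union bound over an $\varepsilon$-net cannot control the event $\sup_{n\le N}|S_n|\le M$ uniformly in $M$ (that would need precision $\sim 1/N$ per prime, hence a net of size $e^{\Theta(N)}$). The paper sidesteps this by \emph{not} bounding partial sums directly: it first reduces (Proposition~\ref{P:corrzero}) unboundedness of $\sum_{k\le n}g(k)X_k$ for non-null $g$ to the vanishing of the logarithmic self-correlations $\lE_{n}\,g(n+h)\overline{g(n)}X_{n+h}X_n$, and then proves this vanishing uniformly over all multiplicative $f_1,\dots,f_\ell$ (Theorem~\ref{T:Random3}) by exactly the counting argument you reject. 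The point is that for the \emph{normalized} averages one only needs the net to approximate $g$ on $[N]$ to accuracy $o(1)$, which by Lemma~\ref{L:close} requires precision $\varepsilon_N=o(1/\log N)$ on prime powers; the net then has size $e^{O(N\log\log N/\log N)}=e^{o(N)}$, comfortably beaten by the Bernstein bound $e^{-\delta_N^2N/4}$ with $\delta_N=(\log N)^{-1/3}\to 0$. (A further small point you would also need: the increments $X_{n+h}X_n$ are not independent over all $n$, which the paper handles by splitting $\N$ into two interlaced unions of intervals of length $h$.) So the missing idea in your proposal is the reduction to correlation averages with a slowly decaying threshold; without it, your argument does not close.
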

 \begin{remarks}
 $\bullet$ It is not hard to show that for any fixed collection of arbitrary sequences $f_1,\ldots, f_\ell\colon \N\to \mathbb{U}$, we have that  \eqref{E:Xn} holds $\omega$-almost surely.
So  	the important point in Theorem~\ref{T:Random2}  is that the set of $\omega$'s for which the conclusion holds is independent of the (uncountably many) multiplicative functions $f_1,\ldots, f_\ell$.

$\bullet$  For $\ell=1$, Theorem~\ref{T:Random1} gives better results that apply to  not necessarily symmetric  random variables. But for $\ell\geq 2$ the method of proof of Theorem~\ref{T:Random1} fails to give \eqref{E:Xn} (since the relevant unweighted result is not known).
 \end{remarks}
Theorem~\ref{T:Random2}  is based on Theorem~\ref{T:Random3} below, which is  proved  by combining some simple counting arguments and concentration of measure estimates for sums of independent random variables.

\subsection{Proof strategy} Let us first recall the  proof strategy of Theorem~\ref{T:Tao} given in \cite{T16}.
 An immediate consequence of Theorem~\ref{T:Tao}  is that for every completely multiplicative function  $f\colon \N\to \S$ we have
\begin{equation}\label{E:Tao1}
		\sup_{ n\in \N}\Big|\sum_{k=1}^n f(k) \Big|=+\infty.\footnote{See also \cite{K17} for a classification  of  multiplicative functions (but not necessarily completely multiplicative) with values $\pm 1$  that satisfy \eqref{E:Tao1}.}
		\end{equation}
It turns out that a variant of this special case (see Proposition~\ref{P:Reduction} below for $w=1$) is the key ingredient in the proof of Theorem~\ref{T:Tao}.
The proof of \eqref{E:Tao1} given in \cite{T16} proceeds by considering separately the case where $f$ is structured (``pretentious'')
and random (``non-pretentious''). The latter case  can be treated
(as in Proposition~\ref{P:corrzero} below) using  the  identities
\begin{equation}\label{E:Tao2}
\lE_{n\in\N}\,  f(n+h)\, \overline{f(n)}=0, \quad h\in \N,
\end{equation}
which hold for random-like (``non-pretentious'') multiplicative functions.

Likewise, our arguments  rely on weighted variants of \eqref{E:Tao1} and \eqref{E:Tao2}
that are of independent interest. For instance, we prove that if $l\geq 2$ and $\alpha$ is irrational, then for every  multiplicative function  $f\colon \N\to \S$ we have
\begin{equation}\label{E:Tao1'}
	\sup_{ n\in \N}\Big|\sum_{k=1}^n f(k) \, e(k^l\alpha)\Big|=+\infty,
	\end{equation}
and we also prove stronger results involving weighted sums of products of shifts of several multiplicative functions.
To prove \eqref{E:Tao1'} we rely on one of the main  results in \cite{FH19}, which implies that for every $l\in \N$ and $\alpha$ irrational we have
\begin{equation}\label{E:Tao2'}
\lE_{n\in\N}\,  f(n+h)\, \overline{f(n)}\, e(n^l\alpha)=0.
\end{equation}
The fact that \eqref{E:Tao2'} holds for every multiplicative function   $f\colon \N\to \S$ (which is not true for \eqref{E:Tao2})
simplifies the proof of \eqref{E:Tao1'}, versus the argument given for the proof of  \eqref{E:Tao1} in \cite{T16},  and ultimately of the fact that $(e(k^l\alpha))_{k\in\N}$ is a good weight.
 One reason is that
we do not have to carry out a separate analysis in the case where $f$ is structured  (``pretentious''), as was the case in \cite{T16}.

The proofs of the results  concerning  random weights are simpler. Theorem~\ref{T:Random1} is based on a variant of \eqref{E:Tao2'} that uses random weights and is proved in Theorem~\ref{T:Random3} via elementary techniques. Theorem~\ref{T:Random1'}
is deduced from Theorem~\ref{T:Tao} using an elementary argument given in  Section~\ref{SS:1.6}.

\subsection{Some open problems}
A possible strengthening of Theorem~\ref{T:Tao} is given in the following problem (for $w=1$ and $a=b$ the problem was previously proposed  by  J.~Ter\"av\"ainen and A.~Klurman
at the December 2018 workshop  of the American Institute of Mathematics  ``Sarnak's Conjecture''):
\begin{problem}
	Is it true that for every   $a,b\colon  \N\to \S$  we have
	$$
	\sup_{d, n\in \N}\Big|\sum_{k=1}^n  a(dk) \, b(d(k+1)) \,  w(k)\Big|=+\infty
	$$
when $w(k)=1$, $k\in\N$,  or when $w(k)=e(k^2\alpha)$, $k\in\N$,  with $\alpha$ irrational?
\end{problem}
When $w=1$ the problem is open even when $a=b=f$, where $f\colon \N\to \S$ is a completely multiplicative function (see remarks on Section~\ref{S:WS}).
More generally, one can  ask whether  for the previous choices of the sequence $w$, for every   $a_1,\ldots, a_\ell\colon \N\to \S$ and   all $h_1,\ldots, h_\ell\in \Z^+$ we have
	$$
	\sup_{d, n\in \N}\Big|\sum_{k=1}^n \prod_{j=1}^\ell a_j(d\, (k+h_j)) \,  w(k)\Big|=+\infty.
	$$
Corollary~\ref{C:mf1} shows that the answer is yes when 
$a_1,\ldots,  a_\ell$ are  multiplicative functions with values on $\S$ and $w$ is the sequence $(e(k^2\alpha))_{k\in\N}$ with $\alpha$ irrational. But unlike the previous discrepancy statements,
  we do not have a way to reduce Problem 1 to one about weighted sums of  multiplicative functions.
Any such reduction probably depends upon obtaining an  integral representation result, analogous to Proposition~\ref{P:Bochner} below, for sequences of the form
$A(k_1,\ldots, k_\ell)=\E_{d\in \Phi} \prod_{j=1}^\ell a_j(dk_j)$, $k_1,\ldots,k_{\ell} \in\N$, where $\Phi$ is a multiplicative F\o lner sequence (see Section~\ref{SS:Folner}) along which all previous averages exist. Note though that more complicated ``higher order multiplicative functions'' arise this way, for instance,  if $f\colon \N\to \S$ is defined by $f(k)=e((n_1\alpha_1+\cdots+n_l\alpha_l)^2)$, where $k=p_1^{n_1}\cdots p_l^{n_l}$  is the unique factorization of $k\in \N$,  and $\alpha_1,\ldots, \alpha_l\in \R$, then
$$
f^2(k)=\E_{d\in \Phi}  f(d)\, \overline{f^2(dk)}\,  f(dk^2)
$$
for every $k\in \N$.

On a different direction, it seems likely that the zero integral condition in Corollary~\ref{C:EDP1}
can be removed. Proving this would probably necessitate to
combine arguments of this article
with a detailed analysis of the pretentious case (similar to the one in \cite{T16}), and it is not clear how to do this.
\begin{problem}\label{P:1}
	Is it true  that Corollary~\ref{C:EDP1} holds even if we do not assume that $\int \phi=0$?
\end{problem}
Let us say that a subset $S$ of $\N$ is {\em good for the Erd\H{o}s discrepancy problem}, or simply, {\em good}, if the
indicator function ${\bf 1}_S$ is a good weight  for the Erd\H{o}s discrepancy problem.
By taking the sequence $(a(k))_{k\in\N}$ in \eqref{E:WEDP}  to  be an appropriate multiplicative function one easily verifies that the sets $\{n\not\equiv 0\pmod{r}\}$ for $r\geq 3$,  $\{2^n,n\in \N\}$, and  $\{p_n, n\in\N\}$, where $p_n$ is the $n$-th prime,  are bad.   On the other hand, it is easy to deduce form Theorem~\ref{T:Tao} that the sets $r\Z$ for $r\in \N$ and $\{n^l,n\in\N\}$ for $l\in\N$,  are good. But it is not  at all clear whether certain simple sets that lack  multiplicative structure are good.
\begin{problem}
 Are the sets $\{p_n+ 1, n\in \N\}$, $\{n^2\pm 1, n\in\N\}$,   $\{2^n+1, n\in\N\}$,
or  $\{[n^c], n\in\N\}$ for $c> 1$ not an integer,  good for the Erd\H{o}s discrepancy problem?
\end{problem}
Theorem~\ref{T:Random1'} implies that random subsets of the integers with  positive density,
and certain sparse random subsets with density roughly $(\log{N})^{-1}$ in $[N]$,
 are almost surely good. But how about  sparser random subsets?
\begin{problem}
Let $a\in (0,1]$ and  $(X_k(\omega))_{k\in\N}$ be a sequence of independent random variables with $\P(X_k=1)=k^{-a}$, $\P(X_k=0)=1-k^{-a}$, $k\in\N$.
Is it true that $\omega$-almost surely the sequence $(X_k(\omega))_{k\in\N}$
is a good weight for the Erd\H{o}s discrepancy problem?
\end{problem}

\subsection{Notation}\label{SS:def}
With $\mathbb{U}$ we denote the complex unit disc $\{z\in \C\colon |z|\leq 1\}$ and with $\S$ we denote the complex unit circle $\{z\in \C\colon |z|=1\}$.
 With $\T$  we denote the  $1$-dimensional torus  that we identify with $\R/\Z$. With $\N$ we denote the positive integers and with $\Z^+$ the non-negative integers. For $N\in \N$ we let $[N]:=\{1,\ldots, N\}$. For $t\in \R$ we also let $e(t):=e^{2\pi i t}$.

 If $A$ is a non-empty finite subset of $\N$  we let
$$
\E_{n\in A}\,a(n):=\frac{1}{|A|}\sum_{n\in A}\, a(n), \quad
\lE_{n\in A}\,a(n):=\frac{1}{\sum_{n\in A}\frac{1}{n}}\sum_{n\in A}\frac{a(n)}{n}.
$$
If $A$ is an infinite subset of $\N$ we let
$$
\E_{n\in A}\, a(n):=\lim_{N\to\infty} \E_{n\in A\cap [N]}\, a(n), \quad
\lE_{n\in A}\, a(n):=\lim_{N\to\infty} \lE_{n\in A\cap [N]}\, a(n)
$$
whenever  the limits exist.

 With $\bN= ([N_l])_{l\in\N}$ we denote a  sequence of intervals with $N_l\to \infty$.
We let
$$
\E_{n\in\bN}\, a(n):=\lim_{l\to\infty}\E_{n\in[N_l]} \, a(n), \quad
\lE_{n\in\bN}\, a(n):=\lim_{l\to\infty}\lE_{n\in[N_l]} \, a(n)
$$
whenever   the limits exist. Using partial summation one sees that if
$\E_{n\in \N}\,a(n)=0$, then also $\lE_{n\in \N}\,a(n)=0$ (but the converse does not hold in general).

\section{Reduction to statements about multiplicative functions}
\subsection{Multiplicative averages}\label{SS:Folner} We denote by  $\Q^+$  the multiplicative group of positive rationals.

\begin{definition} We say that $\Phi=(\Phi_N)_{N\in\N}$  is a \emph{multiplicative F\o lner sequence}, if $\Phi_N$ is a finite subset of $\N$ for every $N\in \N$, and  for every $r\in\Q^+$ we have
\begin{equation}
\label{E:Folner}
\lim_{N\to \infty} \frac 1{|\Phi_N|}{|(r\inv\Phi_N)\triangle  \Phi_N|}=0
\end{equation}
where $r\inv \Phi_N:=\{n\in\N\colon rn\in \Phi_N\}$.
\end{definition}
An example of a multiplicative F\o lner sequence is given by
$$
\Phi_N:=
\{p_1^{k_1}\cdots p_N^{k_N}\colon 0\leq k_1,\ldots, k_N\leq N\}, \quad N\in\N,
$$
where $(p_n)_{n\in\N}$ denotes the sequence of primes.
\begin{definition}  If $ \Phi=(\Phi_N)_{N\in\N}$ is  a multiplicative F\o lner sequence and
 $a\colon \N\to \mathbb{U}$ is such that the average below exists,  we define the {\em multiplicative   average of the sequence $a$ along $ \Phi$} by
$$
\E_{n\in \Phi}\, a(n):=\lim_{N\to \infty} \E_{n\in\Phi_N}\, a(n).
$$
\end{definition}
Note that property \eqref{E:Folner} implies the following dilation invariance property of the multiplicative averages:  For every $a\colon \Q^+\to \mathbb{U}$,
multiplicative F\o lner sequence $\Phi$, and    $r\in \Q^+$, we have
\begin{equation}\label{E:dilation}
\E_{n\in  \Phi}\,  (a(rn)-   a(n))=0.
 \end{equation}

\subsection{Reduction to multiplicative functions via Bochner's theorem}
 A variant of the next lemma was proved in \cite[Section~2]{T16} using Fourier analysis on an appropriate finite Abelian group (of the form $(\Z/M\Z)^r$ for large $M,r\in\N$) and a compactness argument. We use a  somewhat different approach (also used in \cite[Section~10.2]{FH15}) that invokes Bochner's theorem on positive definite functions. We first introduce some notation.
 \begin{definition}
 With $\CM$ we denote the set of all completely multiplicative functions $f\colon \N\to \S$.
 \end{definition}
 Endowed with  pointwise multiplication and the topology of pointwise convergence,
the set $\CM$   is a compact (metrizable) Abelian
group.
\begin{proposition}\label{P:Bochner}
  Let $A\colon \N^2\to \C$ be defined by
$$
A(k,l):=\E_{d\in \Phi} \, a(dk) \, \overline{a(dl)}, \quad k,l\in\N,
$$
where $a\colon \N\to \C$ is a bounded sequence and  $ \Phi=(\Phi_N)_{N\in\N}$ is a multiplicative F\o lner sequence such that all the averages
above exist.  Then  there exists a  (positive) measure $\sigma$ on the space $\CM$, with total mass equal to $\E_{d\in  \Phi}|a(d)|^2$,   such that
$$
A(k,l)=\int_{\CM} f(k) \, \overline{f(l)} \, d\sigma(f), \quad k,l\in \N.
$$
\end{proposition}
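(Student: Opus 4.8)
The plan is to recognize $A$ as the restriction to $\N\times\N$ of a positive‑definite function on the discrete abelian group $\Q^+$ and then invoke Bochner's theorem, after identifying the Pontryagin dual $\widehat{\Q^+}$ with $\CM$. The first step is to show that $A(k,l)$ depends only on the ratio $k/l\in\Q^+$. Extending $a$ to $\Q^+$ by setting $a(x)=0$ for $x\in\Q^+\setminus\N$ and fixing $k,l\in\N$, I would apply the dilation invariance \eqref{E:dilation} to the bounded sequence $d\mapsto a(dk)\,\overline{a(dl)}$ with $r=m\in\N$; this gives $A(mk,ml)=A(k,l)$. Reducing $k/l$ to lowest terms $p/q$ then yields $A(k,l)=A(p,q)=A(k',l')$ whenever $k/l=k'/l'$, so there is a well‑defined $F\colon\Q^+\to\C$ with $A(k,l)=F(k/l)$, and $F(r\inv)=\overline{F(r)}$ because $A(l,k)=\overline{A(k,l)}$.

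Next I would check positive‑definiteness. For $k_1,\dots,k_n\in\N$ and $c_1,\dots,c_n\in\C$,
\[
\sum_{i,j}c_i\,\overline{c_j}\,A(k_i,k_j)=\E_{d\in\Phi}\Big|\sum_i c_i\,a(dk_i)\Big|^2\ge 0,
\]
so $A$ is a positive‑definite kernel. Writing finitely many rationals over a common denominator $l$ as $r_i=k_i/l$, one has $r_i r_j\inv=k_i/k_j$, hence $\sum_{i,j}c_i\overline{c_j}\,F(r_i r_j\inv)=\sum_{i,j}c_i\overline{c_j}\,A(k_i,k_j)\ge 0$: thus $F$ is a positive‑definite function on the discrete group $\Q^+$ (and automatically ``continuous'').

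I would then identify the dual. Since $\Q^+$ is the free abelian group on the primes, a character $\chi\in\widehat{\Q^+}$ is freely specified by its values $\chi(p)\in\S$ and extends via $\chi(k/l)=\chi(k)\,\overline{\chi(l)}$; this is exactly the data of a completely multiplicative $f\in\CM$ with $f(p)=\chi(p)$, and the correspondence $f\mapsto\chi$ is an isomorphism of compact groups that carries the pointwise‑convergence topology on $\CM$ to the (pointwise, $\Q^+$ being discrete) topology on $\widehat{\Q^+}$, with pairing $\chi(k/l)=f(k)\,\overline{f(l)}$. Applying Bochner's theorem on the LCA group $\Q^+$, the positive‑definite function $F$ is the Fourier transform of a finite positive measure $\sigma$ on $\widehat{\Q^+}\cong\CM$, so that
\[
A(k,l)=F(k/l)=\int_{\CM}f(k)\,\overline{f(l)}\,d\sigma(f),
\]
the total mass being $\sigma(\CM)=F(1)=A(1,1)=\E_{d\in\Phi}|a(d)|^2$. (Should the sign convention for Bochner produce $\overline{f(k)}f(l)$ instead, one replaces $\sigma$ by its pushforward under the conjugation $f\mapsto\bar f$, which preserves $\CM$; so the precise convention is immaterial.)

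The main obstacle I expect is the first step: collapsing $A(k,l)$ to a genuine function of $k/l$. One must extend $a$ off $\N$, confirm that all the multiplicative averages entering $A(mk,ml)$ exist along $\Phi$, and apply \eqref{E:dilation} cleanly — this is precisely where the F\o lner hypothesis \eqref{E:Folner} does the real work. Once $F$ is in hand, the positive‑definiteness computation, the identification of $\widehat{\Q^+}$ with $\CM$, and the invocation of Bochner's theorem are essentially bookkeeping.
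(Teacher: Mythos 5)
Your proposal is correct and follows essentially the same route as the paper: extend $a$ by zero to $\Q^+$, use the dilation invariance \eqref{E:dilation} of the multiplicative F\o lner averages to reduce $A(k,l)$ to a function of $k/l$, verify positive definiteness by the same square-expansion computation, identify $\widehat{\Q^+}$ with $\CM$, and invoke Bochner's theorem. The only cosmetic difference is that the paper defines $B(r):=\E_{d\in\Phi}\,a(rd)\,\overline{a(d)}$ directly and then checks $B(k/l)=A(k,l)$, whereas you first establish well-definedness of $A$ on ratios; these are the same argument.
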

\begin{proof}
We first   extend the sequence $a$ to the positive rationals $\Q^+$ by letting $a(r)=0$ for $r\in \Q^+\setminus \N$. We define $B\colon \Q^+\to \C$ as follows
$$
B(r):=\E_{d\in \Phi} \, a(rd) \, \overline{a(d)}, \quad r\in\Q^+.
$$
Using the dilation invariance property \eqref{E:dilation}  and our assumption that the averages defining the sequence $A$ exist, we deduce that the  averages below exist and we have
$$
B(rs^{-1})=\E_{d\in \Phi} \, a(rd) \, \overline{a(sd)}, \quad r,s\in\Q^+.
$$

We are going to use  this identity in order to  verify that $B$ is a positive definite sequence on $\Q^+$ with pointwise multiplication. Indeed, for all $c_1,\ldots, c_N\in \C$ and $r_1,\ldots, r_N \in \Q^+$,  we have
$$
\sum_{i,j\in [N]}\, c_i\, \overline{c_j}\, B(r_i r_j^{-1})=
\E_{d\in \Phi} \big|\sum_{i\in [N]}\, c_i \ a(r_id)\big|^2
\geq 0.
$$
  Note that the dual group of $(\Q^+, \cdot)$ consists of the  completely multiplicative functions on $\Q^+$ with unit modulus,  and any such $\psi\colon \Q^+\to \S$ satisfies
  $\psi(m/n)=f(m)\, \overline{f(n)}$, $m,n\in\N$,  for some completely multiplicative function $f\in \CM$.
  A well known theorem of   Bochner
  gives  that there exists a (positive) Borel measure $\sigma$ on the space $\CM$   such that
$$
B(k/l)=\int_{\CM} f(k) \, \overline{f(l)} \, d\sigma(f), \quad k,l\in \N.
$$
The total mass of $\sigma$ is $B(1)=\E_{d\in  \Phi} |a(d)|^2$.
  Lastly, we have
  $$
  B(k/l)=\E_{d\in \Phi} \, a(kd/l) \, \overline{a(d)}=\E_{d\in \Phi} \, a(kd) \, \overline{a(ld)},
  $$and  the proof is complete.
\end{proof}
Using the previous representation theorem we get the following criterion:
\begin{proposition}\label{P:Reduction}
Let $w\colon \N\to \mathbb{U}$ be   such that for every probability measure
$\sigma$ on the space $\CM$  we have
$$
\sup_{n\in\N}\int_\CM\Big|\sum_{k=1}^n f(k) \, w(k)\Big|^2\ d\sigma(f)=+\infty.
$$
Then $w$ is a good weight for the Erd\H{o}s discrepancy problem.
\end{proposition}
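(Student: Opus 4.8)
The plan is to prove the contrapositive: assuming that $w$ fails to be a good weight, I will exhibit a single probability measure $\sigma$ on $\CM$ for which the supremum in the hypothesis is finite, contradicting the assumption. Concretely, suppose there is a sequence $a\colon\N\to\S$ with $\sup_{d,n\in\N}\big|\sum_{k=1}^n a(dk)\,w(k)\big|=C<\infty$. The whole point is to funnel the weighted discrepancy sums for $a$ through the integral representation furnished by Proposition~\ref{P:Bochner}.

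First I would fix any multiplicative F\o lner sequence and pass to one along which all the relevant correlation averages converge. Since $a$ takes values in $\S$, the quantities $\E_{d\in\Phi_N}\, a(dk)\,\overline{a(dl)}$ are bounded by $1$ in modulus for every pair $(k,l)\in\N^2$; as there are only countably many such pairs, a diagonal argument produces a subsequence of the original F\o lner sequence---still a multiplicative F\o lner sequence, because property \eqref{E:Folner} passes to subsequences---along which all the averages $A(k,l):=\E_{d\in\Phi}\, a(dk)\,\overline{a(dl)}$ exist. Proposition~\ref{P:Bochner} then supplies a positive measure $\sigma$ on $\CM$ with $A(k,l)=\int_{\CM} f(k)\,\overline{f(l)}\,d\sigma(f)$ and total mass $\E_{d\in\Phi}|a(d)|^2=1$, the last equality being exactly where the hypothesis $a\colon\N\to\S$ (rather than merely $a\colon\N\to\mathbb{U}$) enters; thus $\sigma$ is a \emph{probability} measure and so is a legitimate test object for the hypothesis.

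The key computation is to expand the square and interchange the finite sum with the integral:
$$
\int_{\CM}\Big|\sum_{k=1}^n f(k)\,w(k)\Big|^2\,d\sigma(f)=\sum_{k,l=1}^n w(k)\,\overline{w(l)}\,A(k,l)=\E_{d\in\Phi}\Big|\sum_{k=1}^n a(dk)\,w(k)\Big|^2.
$$
Under the standing assumption each term $\big|\sum_{k=1}^n a(dk)\,w(k)\big|^2$ is at most $C^2$, so its average over $d$ is at most $C^2$ for every $n$, yielding $\sup_{n\in\N}\int_{\CM}\big|\sum_{k=1}^n f(k)\,w(k)\big|^2\,d\sigma(f)\le C^2<\infty$. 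This contradicts the hypothesis applied to $\sigma$, and the proof is complete.

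I expect the only genuinely delicate point to be the extraction of a F\o lner sequence along which every $A(k,l)$ exists, together with the verification that the extracted subsequence remains a multiplicative F\o lner sequence; everything else is a direct expansion. A secondary point worth flagging explicitly is that restricting $a$ to values in $\S$ (not just $\mathbb{U}$) is precisely what forces the total mass of $\sigma$ to be exactly $1$, which is what makes the hypothesis---stated for probability measures---available.
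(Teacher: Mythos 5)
Your proof is correct and follows essentially the same route as the paper: contradiction via a bounded counterexample $a$, averaging over a multiplicative F\o lner sequence extracted by diagonalization, invoking Proposition~\ref{P:Bochner} to get a probability measure, and expanding the square. The extra remarks you flag (the subsequence remaining F\o lner, and $a$ taking values in $\S$ forcing total mass $1$) are exactly the points the paper relies on implicitly.
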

\begin{proof}
Arguing by contradiction, suppose that $w$ is not a good weight for the Erd\H{o}s discrepancy problem. Then  there exists a sequence $a\colon \N\to \S$ such that
$$
	\sup_{d, n\in \N}\Big|\sum_{k=1}^n a(dk) \, w(k)\Big|<+\infty.
	$$
We  average with respect to  $d$ over a multiplicative
F\o lner sequence of intervals $ \Phi=(\Phi_N)_{N\in\N}$, chosen so that all relevant averages below exist (such a sequence can always be found using a diagonalisation argument), and deduce that
\begin{equation}\label{E:adk}
\sup_{n\in\N}	\E_{d\in \Phi} \Big|\sum_{k=1}^n a(dk) \, w(k)\Big|^2<+\infty.
\end{equation}
Expanding the square we get that the expression in \eqref{E:adk} is equal to
\begin{equation}\label{E:mnN}
\sup_{n\in\N}\Big(\sum_{k,l\in [n]}	 \, w(k) \, \overline{w(l)}  \, A(k,l)\Big)
	\end{equation}
where
$$
A(k,l):=\E_{d\in \Phi} \, a(dk) \, \overline{a(dl)}, \quad k,l\in\N.
$$
By Lemma~\ref{P:Bochner}, there exists a  (positive) measure $\sigma$ on the space $\CM$, with total mass $\E_{d\in \Phi } |a(d)|^2=1$, such that
$$
A(k,l)=\int_{\CM} f(k) \, \overline{f(l)} \, d\sigma(f), \quad k,l\in \N.
$$
We deduce that the  expression \eqref{E:mnN}, and hence the expression in \eqref{E:adk},
 is equal to
$$
\sup_{n\in\N}	\int_\CM \Big|\sum_{k=1}^n f(k) \, w(k)\Big|^2\, d\sigma(f).
$$
Hence,
$$
\sup_{n\in\N}	\int_\CM \Big|\sum_{k=1}^n f(k) \, w(k)\Big|^2\, d\sigma(f)<+\infty.
$$
This contradicts our assumption and completes the proof.
\end{proof}
\subsection{Reduction to correlation estimates} As was the case in  \cite{T16}, a key step  in the proof of our main results is  an  elementary observation
that allows  to deduce unboundedness of partial sums from vanishing of
self-correlations (which are defined  using logarithmic averages because of reasons explained in the next section).
\begin{proposition}\label{P:corrzero}
Let $b\colon \N\to \mathbb{U}$ be a  non-null sequence such that for every $h\in \N$ we have
$$
 \lE_{n\in  \N}\, b(n+h)\, \overline{b(n)}=0.
$$
Then
$$
\sup_{n\in\N}\Big|\sum_{k=1}^n   b(k)\Big|=+\infty.
	$$
\end{proposition}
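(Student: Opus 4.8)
The plan is to argue by contradiction: suppose $\sup_{n\in\N}|S_n|=:C<+\infty$, where $S_n:=\sum_{k=1}^n b(k)$. The point of departure is the telescoping identity $\sum_{h=1}^H b(n+h)=S_{n+H}-S_n$, valid for all $n,H\in\N$, which gives the uniform pointwise bound $\big|\sum_{h=1}^H b(n+h)\big|\le 2C$. Consequently, for every $H\in\N$,
$$
\lE_{n\in[N]}\Big|\sum_{h=1}^H b(n+h)\Big|^2\le (2C)^2 \quad\text{for all } N\in\N.
$$
The strategy is to show that, on the other hand, the left-hand side is essentially $H\cdot\lE_{n}|b(n)|^2$, which is bounded below by a positive multiple of $H$ by non-nullity; letting $H\to\infty$ then yields the contradiction.

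To carry this out, I expand the square,
$$
\lE_{n\in[N]}\Big|\sum_{h=1}^H b(n+h)\Big|^2=\sum_{h_1,h_2=1}^H \lE_{n\in[N]}\, b(n+h_1)\,\overline{b(n+h_2)},
$$
and separate the diagonal terms $h_1=h_2$ from the off-diagonal ones. For the off-diagonal terms I would first record the elementary fact that logarithmic averages of bounded sequences are shift-invariant: for each fixed $h$, $\lE_{n\in[N]}a(n+h)-\lE_{n\in[N]}a(n)\to 0$ as $N\to\infty$ (reindex, bound the difference of the unnormalised sums using $|\tfrac{1}{m-h}-\tfrac{1}{m}|\le \tfrac{h}{m(m-h)}$ together with the boundary terms by a convergent series, then divide by $\sum_{n\le N}1/n\sim\log N$). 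Applying this to $c(n):=b(n+|h_1-h_2|)\,\overline{b(n)}$ reduces each off-diagonal average to $\lE_{n\in\N}\, b(n+h)\,\overline{b(n)}$ with $h=|h_1-h_2|\ge 1$, which vanishes by hypothesis. Since there are only finitely many such terms for fixed $H$, their total contribution tends to $0$ as $N\to\infty$. The same shift-invariance gives $\lE_{n\in[N]}|b(n+h)|^2=\lE_{n\in[N]}|b(n)|^2+o(1)$, so the diagonal contributes $H\cdot\lE_{n\in[N]}|b(n)|^2+o_H(1)$.

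Combining, for each fixed $H$ we obtain $\lE_{n\in[N]}\big|\sum_{h=1}^H b(n+h)\big|^2=H\cdot\lE_{n\in[N]}|b(n)|^2+o_H(1)$ as $N\to\infty$. Taking $\liminf_{N\to\infty}$ and writing $\delta:=\liminf_{N\to\infty}\lE_{n\in[N]}|b(n)|^2>0$ for the positive quantity furnished by non-nullity, the right-hand side is at least $\delta H$, while the left-hand side is at most $(2C)^2$ by the uniform bound above. Hence $\delta H\le (2C)^2$ for every $H\in\N$, which is absurd. The one point requiring care — and the only mild obstacle — is the logarithmic-average bookkeeping: one must not assume that $\lE_{n}|b(n)|^2$ converges (only its $\liminf$ is controlled), which is why I keep the averages over $[N]$ throughout and pass to $\liminf$ only at the very end, and one must justify the shift-invariance that allows the hypothesis to annihilate the cross terms after relabelling.
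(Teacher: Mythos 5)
Your proof is correct and follows essentially the same route as the paper: contradiction, the telescoping bound $\bigl|\sum_{h=1}^H b(n+h)\bigr|\le 2C$, expansion of the square, vanishing of the off-diagonal terms via the hypothesis and shift-invariance of logarithmic averages, and a lower bound $\geq \delta H$ from non-nullity. The only (immaterial) difference is bookkeeping: the paper passes to a subsequence of intervals along which all the relevant averages converge, whereas you keep averages over $[N]$ throughout and take a $\liminf$ at the end.
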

\begin{proof}
Arguing by contradiction, suppose that the conclusion fails. Then there exists $C>0$ such that
$$
\sup_{n\in\N}\Big|\sum_{k=1}^n b(k)\Big|\leq C.
$$
Using this, we can find
 a sequence of intervals $\bN=([N_l])_{l\in\N}$, with  $N_l\to\infty$,  such that all  averages $\lE_{n\in  \bN}$ written  below exist and for every $H\in \N$
 we have
$$
	 \lE_{n\in  \bN}\Big|\sum_{h=1}^{H}   b(n+h)\Big|^2=\lE_{n\in  \bN}\Big|\sum_{k=1}^{n+H}   b(k)-\sum_{k=1}^n   b(k)\Big|^2\leq 4C^2.
	$$
Since the sequence $b$ is non-null, we have
$$
B:=\lE_{n\in  \bN}|b(n)|^2>0.
$$
Next, notice that
$$
 \lE_{n\in  \bN}\Big|\sum_{h=1}^{H}   b(n+h)\Big|^2=
  \sum_{1\leq h_1\neq h_2\leq H} \lE_{n\in  \bN} \,  b(n+h_1)\, \overline{b(n+h_2)}+H B=H B
$$
since by our assumption $\lE_{n\in  \bN} \,  b(n+h_1)\, \overline{b(n+h_2)}=0$ for $h_1\neq h_2$ and we also used
twice that the  logarithmic averages of a bounded sequence are translation invariant.
From the above we deduce that $H B\leq 4C^2$ and we get a contradiction by choosing $H>4C^2/B$.

\end{proof}
\begin{proposition}\label{P:corrzero2}
	Let $w\colon \N\to \mathbb{U}$ be a non-null sequence such that for every multiplicative function $f\colon \N\to \S$ and every  $h\in \N$    we have
	$$
	\lE_{n\in  \N}\, (f\cdot w)(n+h)\, \overline{(f\cdot w)(n)}\, =0.
	$$
Then $w$ is a good weight for the Erd\H{o}s discrepancy problem.
\end{proposition}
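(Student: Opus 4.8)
The plan is to combine the reduction criterion of Proposition~\ref{P:Reduction} with a measure-theoretic refinement of the correlation argument of Proposition~\ref{P:corrzero}. By Proposition~\ref{P:Reduction} it suffices to show that for every probability measure $\sigma$ on $\CM$ we have
$$
\sup_{n\in\N}\int_\CM\Big|\sum_{k=1}^n f(k)\,w(k)\Big|^2\,d\sigma(f)=+\infty.
$$
Since every $f\in\CM$ is in particular multiplicative, the hypothesis applies to all $f\in\CM$. For each such $f$ the sequence $f\cdot w$ is non-null (as $|f|\equiv 1$) and has vanishing self-correlations, so Proposition~\ref{P:corrzero} already gives pointwise unboundedness for each fixed $f$; the point of the argument is to upgrade this to the integrated, $\sigma$-uniform statement.

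I would argue by contradiction, assuming the supremum above equals some finite $M$. First I would pass, by a diagonalisation, to a sequence of intervals $\bN=([N_l])_{l\in\N}$ along which $\lE_{n\in\bN}|w(n)|^2$ and all the averages $\lE_{n\in\bN}\,w(n+h_1)\overline{w(n+h_2)}\int_\CM f(n+h_1)\overline{f(n+h_2)}\,d\sigma(f)$ exist. Writing $S_n(f):=\sum_{k=1}^n f(k)w(k)$, the telescoping identity $\sum_{h=1}^H (f\cdot w)(n+h)=S_{n+H}(f)-S_n(f)$ together with the assumed bound $\int_\CM|S_n(f)|^2\,d\sigma(f)\leq M$ yields, for every $H$,
$$
\lE_{n\in\bN}\int_\CM\Big|\sum_{h=1}^H (f\cdot w)(n+h)\Big|^2\,d\sigma(f)\leq 4M.
$$
On the other hand, expanding the square and interchanging the finite sum over $h_1,h_2\in[H]$ with the average produces a diagonal and an off-diagonal contribution. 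The diagonal terms equal $\sum_{h=1}^H \lE_{n\in\bN}|w(n+h)|^2=HB$, where $B:=\lE_{n\in\bN}|w(n)|^2>0$ by non-nullness (here I use $\int_\CM|f(n+h)|^2\,d\sigma(f)=1$ since $\sigma$ is a probability measure and $|f|\equiv 1$, together with translation invariance of logarithmic averages of bounded sequences).

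The crux is the vanishing of the off-diagonal terms, and this is where I expect the only genuine subtlety. For fixed $h_1\neq h_2$ and each $l$, Fubini for the finite average lets me write
$$
\lE_{n\in[N_l]}\,w(n+h_1)\overline{w(n+h_2)}\int_\CM f(n+h_1)\overline{f(n+h_2)}\,d\sigma(f)=\int_\CM \lE_{n\in[N_l]}\,(f\cdot w)(n+h_1)\,\overline{(f\cdot w)(n+h_2)}\,d\sigma(f).
$$
By the hypothesis and translation invariance of logarithmic averages, for each fixed $f$ the inner average tends to $0$ as $l\to\infty$; since it is bounded by $1$ in modulus and $\sigma$ is finite, dominated convergence passes the limit inside the integral, so the off-diagonal average vanishes. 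Hence the expanded quantity equals $HB$, and comparison with the upper bound gives $HB\leq 4M$ for every $H$, a contradiction once $H>4M/B$. Thus the only real obstacle is the justification of this interchange of the logarithmic limit with integration over $\CM$, which rests precisely on the per-$f$ vanishing of the self-correlations of $f\cdot w$ supplied by the hypothesis; the remaining steps, including the diagonalisation producing $\bN$, are routine bookkeeping.
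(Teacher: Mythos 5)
Your proposal is correct and follows essentially the same route as the paper: reduce via Proposition~\ref{P:Reduction}, telescope to bound $\lE_{n\in\bN}\int_\CM|\sum_{h=1}^H(f\cdot w)(n+h)|^2\,d\sigma$, and split into a diagonal term $HB$ and off-diagonal terms that vanish by the hypothesis together with bounded convergence over $\CM$. The paper's proof handles the interchange of the logarithmic limit with the $\sigma$-integral by exactly the bounded convergence argument you identify as the crux, so there is no substantive difference.
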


\begin{proof}
	Arguing by contradiction, suppose that the conclusion fails. Then by Proposition~\ref{P:Reduction}  there  exist a sequence $w\colon \N\to \mathbb{U}$, a probability measure $\sigma$ on the space $\CM$, and  $C>0$, such that
		$$
	\sup_{n\in\N}\int_{\CM}\Big|\sum_{k=1}^n    f(k)\, w(k) \Big|^2\, d\sigma(f)\leq C.
	$$
 Using this and a diagonalization argument, we can find  a sequence of intervals $\bN=([N_l])_{l\in\N}$, with  $N_l\to\infty$,  such that $\lE_{n\in  \bN}|w(n)|^2$  and all  averages $\lE_{n\in  \bN}$ written  below exist and for every $H\in \N$
	we have
\begin{multline}\label{E:1}
	\lE_{n\in  \bN}\int_{\CM}\Big|\sum_{h=1}^{H}   (f\cdot w)(n+h)\Big|^2 d\sigma(f) = \lE_{n\in  \bN}\int_\CM \Big|\sum_{k=1}^{n+H}   (f\cdot w)(k)-\sum_{k=1}^n   (f\cdot w)(k)\Big|^2 d\sigma(f)  \\
\leq \lE_{n\in  \bN}\int_\CM 2\, \Big(\Big|\sum_{k=1}^{n+H}   (f\cdot w)(k)\Big|^2+\Big|\sum_{k=1}^n   (f\cdot w)(k)\Big|^2\Big)\,  d\sigma
 \leq 4C.
	\end{multline}

We let
	$$
	A:=\lE_{n\in  \bN}|w(n)|^2>0
	$$
	where the positiveness follows since the sequence  $w$ is non-null by our assumption.
	Next, notice that
	$$
	\lE_{n\in  \N}\Big|\sum_{h=1}^{H}   (f\cdot w)(n+h)\Big|^2=
	\sum_{1\leq h_1\neq h_2\leq H} \lE_{n\in  \N} \,  (f\cdot w)(n+h_1)\, \overline{(f\cdot w)(n+h_2)}+H A=H A
	$$
	since by our assumption $\lE_{n\in  \N} \,  (f\cdot w)(n+h_1)\, \overline{(f\cdot w)(n+h_2)}=0$ for $h_1\neq h_2$. Since $\sigma$ is a probability measure,   we deduce using the bounded convergence theorem that
	\begin{equation}\label{E:2}
	\lE_{n\in  \bN}\int_{\CM}\Big|\sum_{h=1}^{H}   (f\cdot w)(n+h)\Big|^2\ d\sigma(f)=H A.
	\end{equation}

Combining \eqref{E:1} and \eqref{E:2} we deduce	
 that $H\, A\leq 4C$ and we get a contradiction by choosing $H>4C/A$.
\end{proof}

\section{Notions and results from ergodic theory}\label{S:ergodic}
The proof of our main  results regarding structured (zero entropy) sequences depend on some notions and results in ergodic theory that we describe next. The material in this section is not needed for the results concerning random weights.
\subsection{Measure preserving systems}\label{SS:mps}
A {\em measure preserving system}, or simply {\em a system}, is a quadruple $(X,\CX,\mu,T)$
where $(X,\CX,\mu)$ is a probability space and $T\colon X\to X$ is an invertible, measurable,  measure preserving transformation.
We typically omit the $\sigma$-algebra $\CX$  and write $(X,\mu,T)$. Throughout,  for $n\in \N$ we denote  by $T^n$   the composition $T\circ  \cdots \circ T$ ($n$ times) and let $T^{-n}:=(T^n)^{-1}$ and $T^0:=\id_X$. Also, for $f\in L^1(\mu)$ and $n\in\Z$ we denote by  $T^nf$ the function $f\circ T^n$.

We say that the system $(X,\mu,T)$ is {\em ergodic}  if the only functions $f\in L^1(\mu)$ that
satisfy $Tf=f$ are the constant ones. It is {\em totally ergodic} if $(X,\mu,T^d)$ is ergodic for every $d\in \N$.

\subsection{Furstenberg systems}\label{SS:ergodicnotation}
For readers convenience, we reproduce here some  ergodic notions and constructions that can also be found in
\cite{FH18, FH19}.
For the purposes of this  article,
all averages in the definitions below are taken to be logarithmic. The reason is that we later on invoke results from ergodic theory, like Theorem~\ref{T:DisjointSeveral} below, that are only known when the joint Furstenberg systems are defined using logarithmic averages. This limitation comes from the number theoretic input used in the proof of Theorem~\ref{T:DisjointSeveral}, in particular, the identities in \cite[Theorem~3.1]{FH19}.
\begin{definition}\label{D:correlations}
	Let $ \bN:=([N_l])_{l\in\N}$ be a sequence of intervals with $N_l\to \infty$.
	We say that a finite collection of bounded sequences $\mathcal{A}=\{a_1,\ldots, a_\ell\}$
	{\em admits log-correlations  on $\bN$}, if the   limits
	$$
	\lim_{l\to\infty} \lE_{n\in [N_l]}\,  \prod_{j=1}^m \tilde{a}_j(n+h_j)
	$$
	exist for all $m \in \N$, all   $ h_1,\ldots, h_m\in \Z$, 
	and all $\tilde{a}_1,\ldots,\tilde{a}_m\in \mathcal{A}\cup \overline{\mathcal{A}}$.
\end{definition}
For  every finite collection of sequences that admits log-correlations on a given sequence of intervals,  we use a  variant of the correspondence principle of Furstenberg~\cite{Fu77, Fu} in order
to associate a measure preserving system that captures the statistical properties of these sequences.
\begin{definition}\label{P:correspondence}
	Let $a_1,\ldots, a_\ell\colon \Z\to\mathbb{U}$ be sequences that  admit
	log-correlations  on the sequence of intervals
	$\bN:=([N_l])_{l\in\N}$. We let  $\mathcal{A}:=\{a_1,\ldots,a_\ell\}$, $X:=(\mathbb{U}^\ell)^\Z$,  $T$ be the shift transformation on $X$,
	and $\mu$ be the weak-star limit of the sequence of measures $(\lE_{n\in[N_l]} \, \delta_{T^n a})_{l\in \N}$
	where $a:=(a_1,\ldots, a_\ell)$ is thought of as an element of $X$. We call $(X,\mu,T)$
	the {\em joint Furstenberg system
		associated with} ($\mathcal{A}$, $\bN$).
\end{definition}
\begin{remark}
	If we are given sequences $a_1,\ldots, a_\ell\colon \N\to \mathbb{U}$ that are defined on $\N$, we extend them to $\Z$ in an arbitrary way. It is easy to check that  the measure $\mu$ will  not depend on the extension.
\end{remark}
Note that a collection of sequences $a_1,\ldots, a_\ell\colon \Z\to \mathbb{U}$ may have several non-isomorphic joint Furstenberg systems
depending on which sequence of intervals $\bN$ we use in the evaluation of  their joint correlations. For convenience of exposition, we sometimes associate  a property of ergodic nature with a given finite collection of sequences if all joint Furstenberg systems of the collection have this property. In particular, we often use the following terminology:
\begin{definition}
	We say that a  sequence $a\colon \Z\to \mathbb{U}$  is {\em totally ergodic} and/or has {\em zero entropy},  if all its Furstenberg systems are totally ergodic and/or have zero entropy.
\end{definition}
\begin{remark}
In \cite{K73}, a zero entropy sequence is called {\em completely deterministic}.
	\end{remark}
Examples of zero entropy  sequences include the sequences $(e(n^l\alpha))_{n\in\N}$ where $l\in\N$ and $\alpha\in \R$; these sequences  are also totally ergodic if $\alpha$ is irrational (see Proposition~\ref{P:ex} below).

\subsection{Disjointness properties}
We will use the following notion that was  introduced by  Furstenberg in~\cite{Fu67}:
\begin{definition} We say that two systems $(X, \mu,T)$ and $(Y, \nu,S)$
	are {\em disjoint}, if the only $T\times S$ invariant measure on the product space
	$(X\times Y, \mu\times \nu)$, with first and second  marginals the measures $\mu$ and $\nu$	respectively, is the product measure $\mu\times \nu$.
\end{definition}
The notion of disjointness in ergodic theory naturally introduces the following notion of statistical disjointness
of two finite collections of bounded sequences.
\begin{definition}
	We say that two finite collections $\mathcal{A}$ and $\mathcal{B}$  of sequences   with values on $\mathbb{U}$, are {\em statistically disjoint},  if
	all the joint Furstenberg systems of the collection  $\mathcal{A}$ are
	(measure-theoretically) disjoint form all the joint Furstenberg systems of the  collection $\mathcal{B}$.
\end{definition}
The next result shows that if  two collections of sequences are statistically disjoint, then  all their joint correlations decouple
into products of  joint  correlations of $\mathcal{A}$ and joint  correlations of $\mathcal{B}$.

\begin{proposition}\label{P:Disjoint}
	Let $\mathcal{A}=\{a_1,\ldots, a_\ell\}$ and $\mathcal{A}'=\{a'_1,\ldots, a'_{\ell'}\}$ be two collections of   sequences with values on $\mathbb{U}$ that
	are statistically disjoint.
	Then
	$$
	\lim_{N\to \infty} \big(\lE_{n\in [N]}(A_n\,  A'_n) - \lE_{n\in [N]} A_n \cdot \lE_{n\in [N]} A'_n\big)=0
	$$
	for all choices $A_n=\prod_{j=1}^m \tilde{a}_j(n+h_j)$,
	$A'_n=\prod_{j=1}^{m'} \tilde{a}'_j(n+h_j')$, $n\in\N$,
	where $m,m',h_j,h_j'\in \N$ and $\tilde{a}_j\in \mathcal{A}\cup \overline{\mathcal{A}}$,
	$\tilde{a}'_j\in \mathcal{A}'\cup \overline{\mathcal{A}'}$   are arbitrary.
\end{proposition}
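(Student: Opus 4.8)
The plan is to establish the decoupling along an arbitrary sequence of intervals on which the combined collection $\mathcal{A}\cup\mathcal{A}'$ admits log-correlations, and then pass to the full limit by a standard subsequence argument. Write
\[
D_N:=\lE_{n\in[N]}(A_n\,A'_n)-\lE_{n\in[N]}A_n\cdot\lE_{n\in[N]}A'_n .
\]
Since all sequences take values in $\mathbb{U}$, the numbers $D_N$ are bounded. To prove $D_N\to0$ it suffices to show that $\lim_{l\to\infty}D_{N_l}=0$ along every sequence of intervals $\bN=([N_l])_{l\in\N}$, $N_l\to\infty$, on which $\mathcal{A}\cup\mathcal{A}'$ admits log-correlations. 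Indeed, by weak-star compactness of the probability measures on the compact metrizable space $(\mathbb{U}^{\ell+\ell'})^{\Z}$, every subsequence of $(D_N)$ has a further subsequence indexed by some such $\bN$ (along which the measures $\lE_{n\in[N_l]}\delta_{R^n z}$ converge, $z$ being the point with coordinates $a_1,\dots,a_\ell,a'_1,\dots,a'_{\ell'}$ and $R$ the shift); since along it $D_{N_l}\to0$, the whole sequence tends to $0$.

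Fix such a $\bN$ and let $(Z,\rho,R)$, with $Z=(\mathbb{U}^{\ell+\ell'})^{\Z}$, be the joint Furstenberg system of $\mathcal{A}\cup\mathcal{A}'$ associated with $(\mathcal{A}\cup\mathcal{A}',\bN)$. Let $F_1,\dots,F_\ell,F'_1,\dots,F'_{\ell'}\colon Z\to\mathbb{U}$ be the continuous time-zero coordinate evaluations, so that $a_i(n)=F_i(R^n z)$ and $a'_i(n)=F'_i(R^n z)$. Setting $G:=\prod_{j=1}^m \tilde F_j\circ R^{h_j}$ and $G':=\prod_{j=1}^{m'}\tilde F'_j\circ R^{h'_j}$, where $\tilde F_j$ and $\tilde F'_j$ are the coordinate functions or their conjugates matching $\tilde a_j$ and $\tilde a'_j$, we get $A_n=G(R^n z)$ and $A'_n=G'(R^n z)$. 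As $G$, $G'$, and $G\cdot G'$ are continuous on the compact space $Z$, weak-star convergence of the defining measures yields
\[
\lE_{n\in\bN}A_n\to\int_Z G\,d\rho,\qquad \lE_{n\in\bN}A'_n\to\int_Z G'\,d\rho,\qquad \lE_{n\in\bN}(A_n A'_n)\to\int_Z G\,G'\,d\rho .
\]

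It remains to show $\int_Z G\,G'\,d\rho=\int_Z G\,d\rho\cdot\int_Z G'\,d\rho$. Identify $Z$ with $X\times Y$, where $X=(\mathbb{U}^\ell)^{\Z}$ and $Y=(\mathbb{U}^{\ell'})^{\Z}$, so that $R=T\times S$ with $T,S$ the respective shifts, and let $\pi_X,\pi_Y$ be the projections. By construction $G=g\circ\pi_X$ for some continuous $g\colon X\to\C$ (it depends only on the $X$-coordinates) and $G'=g'\circ\pi_Y$ for some continuous $g'\colon Y\to\C$. Since the projections commute with the shift and send $z$ to the respective subtuples, $(\pi_X)_*\rho=\mu$ and $(\pi_Y)_*\rho=\nu$, where $(X,\mu,T)$ and $(Y,\nu,S)$ are precisely the joint Furstenberg systems of $\mathcal{A}$ and of $\mathcal{A}'$ associated with the same intervals $\bN$. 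Thus $\rho$ is a $T\times S$-invariant joining of $(X,\mu,T)$ and $(Y,\nu,S)$ with marginals $\mu$ and $\nu$. By the hypothesis of statistical disjointness these systems are disjoint, so the only such joining is $\rho=\mu\times\nu$. Hence $\int_Z G\,G'\,d\rho=\int_{X\times Y} g(x)\,g'(y)\,d(\mu\times\nu)(x,y)=\int_X g\,d\mu\cdot\int_Y g'\,d\nu=\int_Z G\,d\rho\cdot\int_Z G'\,d\rho$, giving $D_{N_l}\to0$.

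The heart of the argument is the passage from measure-theoretic disjointness to the factorization $\rho=\mu\times\nu$. The two points requiring care are: (i) that the marginal projections of $\rho$ are genuine Furstenberg systems of $\mathcal{A}$ and $\mathcal{A}'$ \emph{along the same intervals} $\bN$, which is exactly what makes statistical disjointness applicable; and (ii) that $G$ and $G'$ factor through $\pi_X$ and $\pi_Y$ respectively, which holds since each is a finite product of time-shifted single-coordinate evaluations. Everything else reduces to the routine fact that the correlations converge to integrals of continuous functions against the weak-star limit measure.
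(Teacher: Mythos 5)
Your proof is correct and follows essentially the same route as the paper: form the empirical measure of the combined orbit on the product space along a suitable subsequence of intervals, observe that its marginals are joint Furstenberg systems of $\mathcal{A}$ and $\mathcal{A}'$ along the same intervals, invoke disjointness to conclude the joining is the product measure, and read off the factorization of correlations as integrals of continuous coordinate functions. The only cosmetic difference is that you argue directly via a subsequence/compactness reduction while the paper argues by contradiction.
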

\begin{proof}
	Arguing by contradiction,  suppose that the conclusion fails. Then there exists a sequence of intervals $\bN=([N_l])_{l\in\N}$, with $N_l\to \infty$,  on which the family $\mathcal{A}\cup \mathcal{A}'$
	admits log-correlations  and we have
	\begin{equation}\label{E:AB}
		\lE_{n\in \bN}(A_n\,  A'_n) \neq  \lE_{n\in \bN} A_n \cdot \lE_{n\in \bN} A'_n
	\end{equation}	
	for some choice of  $A_n=\prod_{j=1}^m \tilde{a}_j(n+h_j)$,
	$A'_n=\prod_{j=1}^{m'} \tilde{a}'_j(n+h_j')$, $n\in\N$,
	where $m,m',h_j,h_j'\in \N$  and $\tilde{a}_j\in \mathcal{A}\cup \overline{\mathcal{A}}$,
	$\tilde{a}'_j\in \mathcal{A}'\cup \overline{\mathcal{A}'}$.
	Let $(X,\mu,T)$ and $(X',\mu',T')$ be the joint  Furstenberg systems associated with  ($\mathcal{A}$, $\bN$)  and ($\mathcal{A}'$, $\bN$) respectively.

	We let  $x_0:=(a_1,\ldots, a_\ell)\in X$ and  $x_0':=(a'_1,\ldots, a'_{\ell'})\in  X'$. After passing to a  subsequence of $\bN$ (which for simplicity we denote again by $\bN$),  we can assume that the weak-star limit
	\begin{equation}\label{D:rho}
		\rho:=\lim_{l\to\infty} \lE_{n\in [N_l]}\ \delta_{(T\times T')^n(x_0, x_0')}
	\end{equation}
	exists and defines a $T\times T'$
	invariant measure
	on $X\times X'$. The projection of $\rho$ on $X$ is the weak-star limit $\lim_{l\to\infty}\lE_{n\in [N_l]}\delta_{x_0}$, which is the measure $\mu$. Likewise, the projection of $\rho$ on $X'$
	is the measure $\mu'$.
	Since the families $\mathcal{A}$ and $\mathcal{A}'$ are statistically disjoint, 
	the systems $(X,\mu,T)$ and $(X',\mu',T')$
	are  disjoint,   hence
	\begin{equation}\label{E:rho}
		\rho=\mu\times \mu'.
	\end{equation}
	
	Now for   $x=(x_1(n),\ldots, x_\ell(n))_{n\in\Z}\in X$  we let
	$$
	F_{h,j}(x):=x_j(h), \quad h\in \Z,\,  j\in \{1,\ldots, \ell\}.
	$$
	Likewise,  for  $x'=(x'_1(n),\ldots, x'_{\ell'}(n))_{n\in\Z}\in X'$ we let
	$$
	F'_{h,j}(x'):=x'_j(h), \quad h\in \Z, \, j\in \{1,\ldots, \ell'\}.
	$$
	With the above notation, we define the function   $F(x):=\prod_{j=1}^mG_{h_j,j}(x)$, $x\in X$, where for $j=1,\ldots, m$  if  $\tilde{a}_j=a_{k_j}$ or $\overline{a}_{k_j}$ for some $k_j\in \{1,\ldots, \ell\}$ we set $G_{h_j,j} $ to be  $F_{h_j,k_j} $ or
	$\overline{F}_{h_j,k_j} $ respectively. Likewise, we define the function
	$F'(x'):=\prod_{j=1}^{m'}G'_{h'_j,j}(x')$, $x'\in X'$.
	Then  using  \eqref{E:AB} and the definition of the measures $\mu,\mu'$ and the measure $\rho$ given by \eqref{D:rho},
	we get  that
	$$
	\int_{X\times X'} F(x)\, F'(x')\, d\rho(x,x') \neq \int_X F\, d\mu \cdot \int_{X'} F'\, d\mu'.
	$$
	This contradicts \eqref{E:rho} and completes the proof.
\end{proof}
The next result follows by  combining  the structural result of \cite[Theorem~1.5]{FH19} with the disjointness statement of \cite[Proposition~3.12]{FH18}.
\begin{theorem}[F., Host~\cite{FH18,FH19}]\label{T:DisjointSeveral}
	All joint Furstenberg systems of any collection of  multiplicative functions with values on $\mathbb{U}$ are disjoint from all zero entropy  totally ergodic systems.
\end{theorem}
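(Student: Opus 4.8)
The plan is to prove the joint statement by reducing it, through the structural description of Furstenberg systems of multiplicative functions, to two soft disjointness principles: one ruling out common eigenvalues and one ruling out shared entropy. Fix a collection of multiplicative functions with values in $\mathbb{U}$, a sequence of intervals $\bN=([N_l])_{l\in\N}$ on which the collection admits log-correlations, and let $(X,\mu,T)$ be the associated joint Furstenberg system; fix also a zero entropy totally ergodic system $(Y,\nu,S)$. To establish disjointness I would take an arbitrary $T\times S$-invariant measure $\lambda$ on $X\times Y$ with marginals $\mu$ and $\nu$, and aim to show $\lambda=\mu\times\nu$.

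First I would invoke the structural result \cite[Theorem~1.5]{FH19}, which describes $(X,\mu,T)$ as a factor of a joining assembled from two pieces: a \emph{structured} part $(Z,m,R)$ whose discrete spectrum consists only of roots of unity — concretely a rotation on a procyclic group, arising from the periodic and Dirichlet-character behaviour of the multiplicative functions — together with a complementary part $(W,\rho,V)$ that is disjoint from every zero entropy system. The purpose of this theorem is precisely to separate the rational equicontinuous behaviour of multiplicative functions from their random behaviour, and it is the step where the number-theoretic input enters, namely the correlation identities of \cite[Theorem~3.1]{FH19}, which ultimately rest on \cite{MRT15,TT18}.

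With this decomposition the rest of the argument is formal, and it is exactly here that both hypotheses on $(Y,\nu,S)$ are used. Since $(Z,m,R)$ has purely rational point spectrum, while $(Y,\nu,S)$ is totally ergodic — so $S^d$ is ergodic for every $d\in\N$ and hence $(Y,\nu,S)$ admits no nontrivial root-of-unity eigenvalue — the two systems share no common nontrivial eigenvalue; because an ergodic system with rational pure point spectrum is a procyclic group rotation, its disjointness from $(Y,\nu,S)$ follows from the classical fact that an ergodic group rotation is disjoint from any system having no common nontrivial eigenvalue. On the other hand $(W,\rho,V)$ is disjoint from $(Y,\nu,S)$ simply because the latter has zero entropy. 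The final step packages these two facts through \cite[Proposition~3.12]{FH18}: if $X$ is a factor of a joining of $Z$ and $W$ with $Z$ disjoint from all totally ergodic systems and $W$ disjoint from all zero entropy systems, then $X$ is disjoint from every system that is simultaneously totally ergodic and of zero entropy. Applying this to $(Y,\nu,S)$ forces $\lambda=\mu\times\nu$, and since this holds for every choice of $\bN$ and of the multiplicative functions, the theorem follows.

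The single-function version is already recorded as Theorem~\ref{T:Disjoint}, so the genuine additional content is the joint statement, and accordingly the main obstacle is the joint structural theorem \cite[Theorem~1.5]{FH19}. The marginal structure of the individual coordinates does not by itself control the joinings among the coordinates of $(a_1,\ldots,a_\ell)$; one really needs the structural description of the full joint system — that its maximal equicontinuous factor has rational spectrum, and that the part of the system above this factor is disjoint from zero entropy systems — before the two soft disjointness principles can be combined. Everything after the invocation of \cite[Theorem~1.5]{FH19} and \cite[Proposition~3.12]{FH18} is standard ergodic theory: disjointness of rational-spectrum rotations from totally ergodic systems, disjointness of the high-complexity part from zero entropy systems, and the permanence of disjointness under passage to factors.
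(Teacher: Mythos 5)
Your proposal takes exactly the same route as the paper, which disposes of this theorem in a single sentence by citing the same two ingredients: the structural result of \cite[Theorem~1.5]{FH19} and the disjointness (combination) statement of \cite[Proposition~3.12]{FH18}. Your expanded account --- the procyclic, rational-spectrum part being disjoint from totally ergodic systems via the absence of common eigenvalues, the complementary part being disjoint from zero entropy systems, and the packaging of the two through the cited proposition --- is a correct gloss on what those references supply, so the write-up is a faithful elaboration of the paper's own (outsourced) argument rather than a different proof.
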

Restating Theorem~\ref{T:DisjointSeveral} using terminology introduced in the previous definitions we get the following result:
\begin{theorem}\label{T:Disjoint'}
	Every finite collection of  multiplicative functions with values on $\mathbb{U}$ is statistically disjoint from every  totally ergodic sequence with zero entropy.
\end{theorem}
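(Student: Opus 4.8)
The plan is to unwind the two definitions involved---that of \emph{statistically disjoint} collections and that of a \emph{totally ergodic sequence with zero entropy}---and then to invoke Theorem~\ref{T:DisjointSeveral} directly. First I would fix a finite collection $\mathcal{A}=\{f_1,\ldots,f_\ell\}$ of multiplicative functions with values on $\mathbb{U}$ and a totally ergodic sequence $w\colon\Z\to\mathbb{U}$ with zero entropy, and I would regard $w$ as the singleton collection $\mathcal{B}=\{w\}$. By the definition of statistical disjointness, to establish the theorem I must show that every joint Furstenberg system of $\mathcal{A}$ is (measure-theoretically) disjoint from every joint Furstenberg system of $\mathcal{B}$.

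The key observation is that the joint Furstenberg systems of the singleton collection $\{w\}$ are exactly the Furstenberg systems of the single sequence $w$, since there is only one sequence to correlate. Now $w$ is assumed to be totally ergodic with zero entropy, and by the definition of these notions for sequences this means precisely that each Furstenberg system of $w$ is a totally ergodic system with zero entropy. Thus the target reduces to showing that every joint Furstenberg system of $\mathcal{A}$ is disjoint from each of the totally ergodic zero-entropy systems arising as a Furstenberg system of $w$.

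This is an immediate consequence of Theorem~\ref{T:DisjointSeveral}, which asserts that all joint Furstenberg systems of any collection of $\mathbb{U}$-valued multiplicative functions are disjoint from \emph{all} zero-entropy totally ergodic systems---in particular, from the Furstenberg systems of $w$, which were just identified as systems of this type. Combining this with the reduction of the previous paragraph yields the required statistical disjointness of $\mathcal{A}$ and $\{w\}$, and since $\mathcal{A}$ and $w$ were arbitrary, the proof is complete.

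Since Theorem~\ref{T:Disjoint'} is merely a reformulation of Theorem~\ref{T:DisjointSeveral} in the sequence-level vocabulary set up in the preceding definitions, I do not expect any genuine obstacle. The only point deserving explicit mention is the (routine) verification that the joint Furstenberg systems of a singleton collection coincide with the Furstenberg systems of its single member, together with the observation that Theorem~\ref{T:DisjointSeveral} is stated for \emph{arbitrary} zero-entropy totally ergodic systems, so that it applies verbatim to the Furstenberg systems of $w$ rather than requiring those systems to be of any special form.
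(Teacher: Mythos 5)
Your proposal is correct and matches the paper, which offers no separate proof but introduces the theorem with the words "Restating Theorem~\ref{T:DisjointSeveral} using terminology introduced in the previous definitions we get the following result" --- exactly the unwinding of definitions you carry out. The only content is the routine identification of the joint Furstenberg systems of a singleton collection with the Furstenberg systems of its member, which you correctly flag as the one point needing mention.
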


\section{Proof of main results for structured weights}

\subsection{Proof of Theorems~\ref{T:EDP1} and \ref{T:mf1}}
First we show that the assumption of Proposition~\ref{P:corrzero} is satisfied for various sequences
of interest.
\begin{proposition}\label{P:zero'}
Suppose that  $w\colon \N\to \mathbb{U}$
is a totally ergodic sequence with   zero entropy and  vanishing self-correlations. Let also  $f_1,\ldots, f_\ell\colon \N\to \mathbb{U}$ be multiplicative functions, $h_1,\ldots, h_\ell\in \Z^+$, and
  $
  b(n):=w(n)\, \prod_{j=1}^\ell f_j(n+h_j) $, $n\in\N$. Then for every $h\in\N$  we have
$$
 \lE_{n\in  \N}\, b(n+h)\, \overline{b(n)}=0.
$$
 \end{proposition}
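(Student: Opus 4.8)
The plan is to expand the self-correlation of $b$, separate the contribution of the multiplicative functions from that of $w$, and then exploit statistical disjointness to decouple the two factors. Writing out the product, for $h\in\N$ we have
$$
b(n+h)\,\overline{b(n)} = \Big(w(n+h)\,\overline{w(n)}\Big)\cdot\prod_{j=1}^\ell f_j(n+h+h_j)\,\overline{f_j(n+h_j)}.
$$
The second factor is a product of shifts of the $f_j$'s and their conjugates, hence is built from $\mathcal{A}\cup\overline{\mathcal{A}}$ with $\mathcal{A}=\{f_1,\ldots,f_\ell\}$; the first factor is a product of shifts of $w$ and $\overline{w}$, built from $\mathcal{A}'\cup\overline{\mathcal{A}'}$ with $\mathcal{A}'=\{w\}$.

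First I would invoke Theorem~\ref{T:Disjoint'}: since $w$ is totally ergodic with zero entropy, the finite collection $\mathcal{A}$ of multiplicative functions is statistically disjoint from $\mathcal{A}'=\{w\}$. This lets me apply Proposition~\ref{P:Disjoint} to the two factors above. Concretely, setting $A_n := \prod_{j=1}^\ell f_j(n+h+h_j)\,\overline{f_j(n+h_j)}$ and $A'_n := w(n+h)\,\overline{w(n)}$, the decoupling gives
$$
\lim_{N\to\infty}\Big(\lE_{n\in[N]}(A_nA'_n)-\lE_{n\in[N]}A_n\cdot\lE_{n\in[N]}A'_n\Big)=0.
$$

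Next I would use the two hypotheses on $w$ and the $f_j$'s to annihilate the product term. Since each $f_j$ takes values in $\mathbb{U}$, we have $|A_n|\le1$, so $|\lE_{n\in[N]}A_n|\le1$ stays bounded; meanwhile the vanishing self-correlations of $w$ give $\lim_{N\to\infty}\lE_{n\in[N]}A'_n=\lE_{n\in\N}\,w(n+h)\,\overline{w(n)}=0$ for $h\in\N$. Hence the product $\lE_{n\in[N]}A_n\cdot\lE_{n\in[N]}A'_n\to0$, and the displayed identity forces $\lE_{n\in[N]}(A_nA'_n)\to0$. Since $A_nA'_n = b(n+h)\,\overline{b(n)}$, this yields exactly $\lE_{n\in\N}\,b(n+h)\,\overline{b(n)}=0$.

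The one technical wrinkle is that $h_j\in\Z^+$ permits $h_j=0$, so the shift in $\overline{w(n)}$ and possibly in some $\overline{f_j(n+h_j)}$ is $0$ rather than strictly positive, whereas Proposition~\ref{P:Disjoint} is phrased for shifts in $\N$. I would resolve this by translating the index: replacing $n$ by $n+1$ turns every shift appearing above into a positive integer, and since logarithmic averages of bounded sequences are translation invariant (as already used in the proof of Proposition~\ref{P:corrzero}), the value of $\lE_{n\in\N}\,b(n+h)\,\overline{b(n)}$ is unaffected. This bookkeeping is the only delicate point in this proof; all of the substance is carried by Theorem~\ref{T:Disjoint'} and the decoupling supplied by Proposition~\ref{P:Disjoint}.
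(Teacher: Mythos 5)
Your proof is correct and takes essentially the same route as the paper: invoke Theorem~\ref{T:Disjoint'} to obtain statistical disjointness of $\{f_1,\ldots,f_\ell\}$ and $\{w\}$, decouple the correlation via Proposition~\ref{P:Disjoint}, and conclude from the vanishing self-correlations of $w$ together with the boundedness of the multiplicative factor. Your remark about shifting the index to accommodate $h_j=0$ is a harmless bookkeeping point that the paper leaves implicit.
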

 \begin{remark}
 For the purpose of proving Theorem~\ref{T:EDP1} we only need to consider the case where $\ell=1$ and $f_1$ is  completely multiplicative of unit modulus.
 But this special case does not seem to offer significant simplifications.
 \end{remark}
\begin{proof}
By Theorem~\ref{T:Disjoint'}, the collection of sequences $\{f_1,\ldots, f_\ell\}$ and $\{w\}$  are statistically disjoint.
By Proposition~\ref{P:Disjoint}, we have that the difference between the average
$$ \lE_{n\in  [N]}\, b(n+h)\, \overline{b(n)}
$$
and the product of averages
$$
 \lE_{n\in  [N]}\, w(n+h)\, \overline{w(n)}\cdot  \lE_{n\in  [N]}\, \prod_{j=1}^\ell f_j(n+h_j+h)\, \prod_{j=1}^\ell \overline{f_j(n+h_j)}
$$
converges to zero as $N\to\infty$. Since by our assumption $\lE_{n\in  \N}\, w(n+h)\, \overline{w(n)}=0$ for every $h\in\N$,  the result follows.
\end{proof}
\begin{proof}[Proof of Theorems~\ref{T:EDP1} and \ref{T:mf1}]
	Theorem~\ref{T:EDP1} follows immediately from Propositions~\ref{P:corrzero2} and \ref{P:zero'} (for $\ell=1$, $h_1=0$).
	
	To prove Theorem~\ref{T:mf1}, we note first that  by Theorem~\ref{T:Disjoint'},  the collection of sequences    $\{f_1,\ldots, f_\ell\}$ and $\{w\}$ are statistically disjoint. Hence, Proposition~\ref{P:Disjoint} gives
	that the difference
	$$
	\lE_{n\in[N]} |w(n)|^2\, \prod_{j=1}^\ell |f_j(n+h_j)|^2 - \lE_{n\in[N]} |w(n)|^2  \cdot \lE_{n\in[N]} \prod_{j=1}^\ell |f_j(n+h_j)|^2.
	$$
	converges to $0$ as $N\to \infty$. Using this and our assumption that the sequences $(w(n))_{n\in\N}$ and $(\prod_{j=1}^\ell f_j(n+h_j))_{n\in\N}$
	are non-null, we deduce that their product is also non-null.
	With this in mind,  Theorem~\ref{T:mf1}  follows   from  Propositions~\ref{P:corrzero} and \ref{P:zero'}.	
\end{proof}

\subsection{Proof of Corollaries~\ref{C:EDP1} and \ref{C:mf1}}
We will   need the following fact:
\begin{proposition}\label{P:ex}
Let  $P\in \R[t]$ be a non-constant polynomial with  irrational leading coefficient  
and let $\phi\colon \T\to \mathbb{U}$ be Riemann integrable. Then  the sequence  $(\phi(P(n)))_{n\in\N}$ has zero entropy,  is totally ergodic, and has a unique Furstenberg system.
\end{proposition}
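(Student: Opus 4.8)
The goal is to establish three properties of the sequence $(\phi(P(n)))_{n\in\N}$: zero entropy, total ergodicity, and uniqueness of the Furstenberg system. My strategy is to first understand the underlying dynamical object through equidistribution, then reduce all three claims to properties of a single explicit nilsystem (in fact a skew product on a torus), and finally handle the Riemann-integrable (rather than continuous) function $\phi$ by an approximation argument.

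First I would reduce to the polynomial orbit. Write $P(t)=\alpha_d t^d+\cdots+\alpha_1 t+\alpha_0$ with $\alpha_d$ irrational. The key classical input is Weyl's theorem: the sequence $(P(n),P(n+1),\ldots)$ — or more precisely the orbit of the associated unipotent affine transformation — is equidistributed in a compact nilmanifold (concretely, one may realize the correlations via the polynomial sequence $n\mapsto (P(n+h_1),\ldots,P(n+h_m))$ on $\T^m$, whose closure and equidistribution are governed by Weyl). Because $\alpha_d$ is irrational, all the relevant linear combinations $\sum_j c_j P(n+h_j)$ that arise in computing correlations have irrational leading coefficient whenever the $c_j$ are not all zero, so Weyl equidistribution holds along the \emph{full} sequence $\N$, not merely along a subsequence. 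This immediately gives \textbf{uniqueness of the Furstenberg system}: the limits $\lim_{l}\E_{n\in[N_l]}\prod_j \tilde\phi_j(P(n+h_j))$ do not depend on the choice of intervals $\bN$, because each such limit equals a fixed integral $\int \prod_j \tilde\phi_j\, d(\text{Haar})$ over the torus (for continuous $\phi$), so the measure $\mu$ in Definition~\ref{P:correspondence} is uniquely determined.

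Next I would identify the Furstenberg system explicitly. For $\phi$ continuous, the factor map sending $\T$-orbits to $\mathbb{U}^\Z$ intertwines the shift $T$ with the affine unipotent map on the torus whose orbit carries the equidistributed polynomial. Total ergodicity then follows from a standard fact: a unipotent affine transformation (equivalently, a polynomial sequence) with irrational leading behavior is \emph{totally uniquely ergodic} — every power $T^d$ is again such a map and remains uniquely ergodic by Weyl applied to $P(dn+r)$, whose leading coefficient $d^d\alpha_d$ is still irrational. Hence \textbf{total ergodicity} transfers to the Furstenberg system as a factor. For \textbf{zero entropy}, I would use that the system is a factor of a nilsystem (or of an isometric/distal extension built from the torus rotation and affine maps), and such systems have zero topological and measure-theoretic entropy; alternatively one invokes that equicontinuous or distal systems have zero entropy, and entropy passes to factors.

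The main obstacle is the transition from continuous $\phi$ to merely Riemann integrable $\phi$. For the correlation limits, I would approximate $\phi$ in $L^1(\T)$ from above and below by continuous functions, using unique ergodicity to pass the Birkhoff-type averages through and control the error by $\int|\phi-\phi_\varepsilon|\,d t$, which is small precisely because $\phi$ is Riemann integrable (its set of discontinuities is Haar-null). This guarantees the limits exist and agree for every $\bN$, securing uniqueness. The subtle point is that zero entropy and total ergodicity are statements about the \emph{symbolic} system generated by $(\phi(P(n)))_n$, which is a factor of the torus system via a not-necessarily-continuous coding map; I would argue that since the coding is continuous off a Haar-null set and the invariant measure gives that set measure zero, the resulting measure-preserving factor inherits total ergodicity and zero entropy from the torus affine system. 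I expect this measure-theoretic factor argument — confirming that the push-forward under a discontinuous but almost-everywhere-continuous coding yields a genuine measure-theoretic factor of the (totally uniquely ergodic, zero-entropy) affine torus system — to require the most care.
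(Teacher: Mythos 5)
Your proposal is correct and follows essentially the same route as the paper: realize $(\phi(P(n)))_{n\in\N}$ as a Riemann-integrable coding of the orbit of a uniquely ergodic, totally ergodic, zero-entropy unipotent affine map $S$ on $\T^d$, deduce uniqueness of the Furstenberg system from unique ergodicity, and transfer total ergodicity and zero entropy to the symbolic system as a measure-theoretic factor. The point you flag as delicate (pushing forward through the almost-everywhere-continuous coding) is handled in the paper exactly as you suggest, by noting that $\Psi\circ S^n$ is Riemann integrable and that Birkhoff averages of Riemann-integrable functions in uniquely ergodic systems converge to the integral.
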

\begin{remark}
In order to have total ergodicity it is essential that  the leading coefficient of $P$ (and not just any non-constant coefficient)  is irrational. For example, if  $a(n):=e(\frac{n^3}{3} +n^2\alpha$), $n\in\N$,  where $\alpha$ is irrational, then it turns out that  the sequence $(a(n))$ is not totally ergodic. We thank S.~Pattison for pointing this out, see  
Sections 5.3 and 5.4 in \cite{Pa} for  a related discussion. 
	\end{remark}
\begin{proof} Let $d:=\deg{P}$.
We start with the   well known fact (see~\cite[Section~1.7]{Fu} or \cite[Section~4.4]{Pa}) that there
exists a unipotent affine transformation  $S\colon \T^d\to \T^d$, with unique invariant measure
the Haar measure $m_{\T^d}$, so that the system $(\T^d, m_{\T^d}, S)$
is  totally ergodic (here we used that the leading coefficient of $P$ is irrational),  a Riemann integrable function $\Psi\colon \T^d\to \mathbb{U}$, and $y_0\in \T^d$, such that
 \begin{equation}\label{E:Phi}
 \Psi(S^ny_0)=\phi(P(n))\quad \text{ for every } n\in\Z.
 \end{equation}
  (For instance, when $P(n)=n^2\alpha$, $n\in\N$,  we can take
$S(t,s)=(t+\alpha,s+2t+\alpha)$,  $\Psi(t,s)=\phi(t)$, $t,s\in\T$, and $y_0=(0,0)$.)
We let $X:=\mathbb{U}^\Z$ and  $T$ be the shift transformation  on $X$. We define  the map
$\pi\colon \T^d\to X$   by
\begin{equation}\label{E:pi}
\pi(y):=(\Psi((S^ny))_{n\in\Z},\quad  \text{for } y\in\T^d.
\end{equation}
 Clearly we have $\pi\circ T=S\circ \pi$. Next,
let  $m\in \N$ and  $\ell_{-m},\ldots, \ell_m\in\Z$.
We define the function
$$
F(x):=\prod_{j=-m}^m x(j)^{\ell_j} \quad \text{ for } \ x=(x(n))_{n\in\Z}\in X,
$$
where we used the following conventions: for $z\in \mathbb{U}$ and $k<0$ we have
$z^k:=\overline{z^{-k}}$ and  $0^0=0$.
Note that the linear span of all such functions forms a conjugation closed subalgebra of $C(X)$ that separates points, hence it is  dense in $C(X)$.

Next note that for $x_0:=(\phi(P(n)))_{n\in\Z}\in X$ we have
\begin{multline*}
	\lim_{N\to\infty}\frac{1}{N} \sum_{n=1}^N F(T^nx_0)=\lim_{N\to\infty}\frac{1}{N} \sum_{n=1}^N\prod_{j=-m}^m \phi(P(n+j))^{\ell_j}\\
	=
	\lim_{N\to\infty}\frac{1}{N} \sum_{n=1}^N\prod_{j=-m}^m \Psi(S^{n+j}y_0)^{\ell_j}
	=\int_{\T^d}\prod_{j=-m}^m\Psi(S^jy)^{\ell_j}\, dm_{\T^d}(y)
	=\int_{\T^d}F\circ\pi\,dm_{\T^d},
\end{multline*}
where to justify the second identity we use \eqref{E:Phi},  for the third  we  use the unique ergodicity of $S$ and the fact that $\Psi\circ S^n$ is Riemann integrable for $n\in\Z$, and for the fourth we use \eqref{E:pi}.
By linearity and density, it follows that  the sequence of measures $(\E_{n\in[N]}\delta_{T^nx_0})_{N\in\N}$ (and hence the sequence
$(\lE_{n\in[N]}\delta_{T^nx_0})_{N\in\N}$)
converges weak-star to a measure $\mu$ on $X$, which is equal to the image of the measure $m_{\T^d}$ under $\pi$.
From the above, we deduce that   the sequence  $(\phi(P(n)))_{n\in\Z}$ has  a unique Furstenberg system, which
is $(X,\mu,T)$, and $\pi$ is a factor map from the system $(\T^d,m_{\T^d},S)$ to the system $(X,\mu,T)$.
Since the system $(\T^d,m_{\T^d},S)$ is totally ergodic and has zero entropy, the same holds for
its factor  $(X,\mu,T)$. This completes the proof.
\end{proof}

\begin{proof}[Proof of Corollaries~\ref{C:EDP1} and \ref{C:mf1}]
  It suffices to verify that  the sequence $w(n):= \phi(P(n))$, $n\in\N$,  satisfies the assumptions of Theorem~\ref{T:EDP1}. Since $P$ has a non-constant coefficient irrational, the sequence $(P(n))_{n\in\N}$ is equidistributed in $\T$, which gives that
  $  \lE_{n\in\N} |w(n)|^2=\int |\phi|^2>0$, so $w$ is non-null.
  Moreover,  it follows from Proposition~\ref{P:ex} that $w$  has zero entropy and  is totally ergodic. It remains to verify that it has vanishing self-correlations, meaning,
$$
\lE_{n\in\N}\, w(n+h)\, \overline{w(n)}=0
$$
 for every $h\in \N$.
In fact, we establish a stronger property: If $\phi,\psi\colon \T\to \C$ are Riemann integrable, then for every $h\in\N$ we have
\begin{equation}\label{E:autcor}
\E_{n\in\N}\, \phi(P(n+h))\, \psi(P(n))=\int \phi \, dm_\T\cdot \int \psi \, dm_\T.
\end{equation}
Using standard Weyl estimates this is easily shown to be the case when $\phi(t):=e(kt)$ and $\psi:=e(lt)$ for some  $k, l\in \Z$ (this is the only point where we use the assumption that $P$ has a non-linear coefficient irrational). Using linearity and uniform approximation by trigonometric polynomials,
we deduce that \eqref{E:autcor} holds for all $\phi,\psi\in C(\T)$. Finally, we
deduce that \eqref{E:autcor}
holds for all Riemann integrable $\phi,\psi$ by approximating them in $L^1(m_\T)$ by continuous functions and using that the sequence $(P(n+h))_{n\in\N}$ is equidistributed in $\T$ for every $h\in \Z$. This completes the proof.
\end{proof}

\section{Proof of main results for random weights}
\subsection{Proof of Theorems~\ref{T:Random1} and \ref{T:Random2}}
For $N\in \N$, we denote by  ${\bf 1}_{[N]}$ the indicator function of the set $[N]$ and let
$$
\mathcal{M}_N:=\{f\cdot {\bf 1}_{[N]} \  \text{ where } f\colon\N\to \mathbb{U} \text{ is multiplicative}\}.
$$
 We also  let
$B_\varepsilon$  be an $\varepsilon$-net of points in $\mathbb{U}$ of minimal cardinality (thus $|B_\varepsilon|\leq 4\varepsilon^{-2}$)
and define
$$
\mathcal{M}_{\varepsilon,N}:=\{g\in \mathcal{M}_N\colon g(k) \in B_\varepsilon \text{ for all prime powers } k\in[N] \}.
$$
We need two lemmas. The first is an approximation property.
\begin{lemma}\label{L:close}
 Let $f\colon \N\to \mathbb{U}$ be a multiplicative function. Then for every $\varepsilon>0$ and $N\in\N$, there exists $g\in \mathcal{M}_{\varepsilon,N}$ such that
 $$
 \norm{f-g}_{L^\infty[N]}\leq 2   \varepsilon  \log{N}.
 $$
 \end{lemma}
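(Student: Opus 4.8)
The plan is to build $g$ directly from $f$ by snapping the values of $f$ on prime powers to the nearest point of the net $B_\varepsilon$, and then to control the resulting error multiplicatively. Since $f$ is only assumed multiplicative (not completely multiplicative), it is determined by its values $f(p^j)$ on the prime powers, and these may be prescribed independently. First I would define a multiplicative function $g_0\colon\N\to\mathbb{U}$ as follows: for every prime power $p^j\le N$, choose $g_0(p^j)\in B_\varepsilon$ with $|f(p^j)-g_0(p^j)|\le\varepsilon$ (possible because $B_\varepsilon$ is an $\varepsilon$-net of $\mathbb{U}$), and for the remaining prime powers set $g_0$ arbitrarily, say equal to $1$. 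Extending $g_0$ multiplicatively and putting $g:=g_0\cdot\mathbf{1}_{[N]}$ yields an element of $\mathcal{M}_N$ whose values at every prime power in $[N]$ lie in $B_\varepsilon$; that is, $g\in\mathcal{M}_{\varepsilon,N}$.

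Next I would estimate $|f(n)-g(n)|$ for a fixed $n\in[N]$. Writing the prime factorization $n=\prod_{i=1}^k p_i^{a_i}$, each factor satisfies $p_i^{a_i}\le n\le N$, so $g$ agrees with the snapped values on all of them and $f(n)=\prod_{i=1}^k f(p_i^{a_i})$, $g(n)=\prod_{i=1}^k g_0(p_i^{a_i})$. The key elementary input is the telescoping bound $|\prod_{i=1}^k z_i-\prod_{i=1}^k w_i|\le\sum_{i=1}^k|z_i-w_i|$, valid for $z_i,w_i\in\mathbb{U}$, which follows by inserting the identity $\prod z_i-\prod w_i=\sum_j(\prod_{i<j}w_i)(z_j-w_j)(\prod_{i>j}z_i)$ and using that every partial product has modulus at most $1$. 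Applying it with $z_i=f(p_i^{a_i})$ and $w_i=g_0(p_i^{a_i})$ gives $|f(n)-g(n)|\le\varepsilon\,\omega(n)$, where $\omega(n)$ denotes the number of distinct prime factors of $n$.

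Finally I would convert the count $\omega(n)$ into the claimed logarithmic factor. Since each prime factor of $n$ is at least $2$, we have $n\ge 2^{\omega(n)}$, hence $\omega(n)\le\log_2 n=\log n/\log 2\le 2\log n\le 2\log N$ (the case $n=1$ being trivial, as $f(1)=g(1)=1$). Combining with the previous step yields $|f(n)-g(n)|\le 2\varepsilon\log N$ for every $n\in[N]$, and taking the supremum over $n\in[N]$ gives $\norm{f-g}_{L^\infty[N]}\le 2\varepsilon\log N$, as required.

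As for difficulty, there is no genuine obstacle here: the statement is an elementary approximation fact. The only point requiring a little care is that $f$ is \emph{merely} multiplicative, so the approximation must be carried out independently at each prime power $p^j$ rather than only at the primes $p$; once this is arranged, the telescoping product estimate and the bound $\omega(n)\le\log_2 n$ deliver the conclusion immediately.
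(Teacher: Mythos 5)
Your proof is correct and follows essentially the same route as the paper's: snap the values of $f$ at the prime powers in $[N]$ to the $\varepsilon$-net $B_\varepsilon$, then telescope over the prime-power factorization of $n$ and bound the number of factors by $\log_2 n\le 2\log N$. The only cosmetic difference is that you spell out the telescoping identity and the $n\ge 2^{\omega(n)}$ bound, which the paper leaves implicit.
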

 \begin{proof}
Since $B_\varepsilon$ is an $\varepsilon$-net of $\mathbb{U}$,  and  an element of $\CM$ can take  arbitrary prescribed values on prime powers, as long as these values are taken in $\mathbb{U}$,   there exists $g\in \mathcal{M}_{\varepsilon,N}$ such that $g(1)=f(1)$ and
\begin{equation}\label{E:est}
|f(k)-g(k)|\leq \varepsilon\,
\text{ for all prime powers } \, k\in [N].
\end{equation}
 For $n\in \{2,\ldots, N\}$, let
$n=k_1\cdots k_l$, where $l\leq \log_2{N}$, be the unique factorization of $n$ into prime powers $k_1,\ldots,k_l$. Using the multiplicativity of $f$ and $g$, the estimate \eqref{E:est},  and telescoping, we get
$$
|f(n)-g(n)|=\Big|\prod_{j=1}^lf(k_j)-\prod_{j=1}^lg(k_j)\Big|\leq \varepsilon l\leq 2 \varepsilon \log{N}.
$$
 This completes the proof.
 \end{proof}
For $\varepsilon>0$ and $\ell, N\in \N$, we let
\begin{equation}\label{E:CM}
	\mathcal{M}_{\ell,\varepsilon,N}=\{(g_1,\ldots, g_\ell)\colon g_1,\ldots,g_\ell\in
	\mathcal{M}_{\varepsilon,N}\}.
\end{equation}
 The next lemma gives an  upper bound on the elements of $\mathcal{M}_{\ell,\varepsilon, N}$ that suffices for our purposes.
\begin{lemma}\label{L:count}
Let $\varepsilon>0$ and  $\ell\in \N$. Then for all large enough $N\in\N$ we have
$$
|\mathcal{M}_{\ell,\varepsilon,N}|\leq e^{4\ell \log(2\varepsilon^{-1}) \, \frac{N}{\log{N}}}
$$
\end{lemma}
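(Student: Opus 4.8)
The plan is to exploit the fact that a multiplicative function is completely determined by its values on prime powers, so that an element of $\mathcal{M}_{\varepsilon,N}$ amounts to no more than a choice of one point of $B_\varepsilon$ for each prime power in $[N]$. First I would observe that if $g=f\cdot\mathbf{1}_{[N]}\in\mathcal{M}_{\varepsilon,N}$, then the restriction of $g$ to $[N]$ is determined by the values $f(p^k)$ over all prime powers $p^k\leq N$: indeed every $n\leq N$ factors into prime powers each of which is itself $\leq N$, and $f(1)=1$ is fixed. Since by definition each such value lies in $B_\varepsilon$, the map sending $g$ to the tuple $(g(p^k))_{p^k\leq N}$ is injective, whence
$$
|\mathcal{M}_{\varepsilon,N}|\leq |B_\varepsilon|^{\Pi(N)},
$$
where $\Pi(N)$ denotes the number of prime powers in $[N]$.

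Next I would pass to $\ell$-tuples and insert the two quantitative inputs. Raising the previous bound to the $\ell$-th power and using $|B_\varepsilon|\leq 4\varepsilon^{-2}$ gives
$$
|\mathcal{M}_{\ell,\varepsilon,N}|=|\mathcal{M}_{\varepsilon,N}|^\ell\leq (4\varepsilon^{-2})^{\ell\,\Pi(N)}=e^{2\ell\,\Pi(N)\,\log(2\varepsilon^{-1})},
$$
where I used the identity $\log(4\varepsilon^{-2})=2\log(2\varepsilon^{-1})$. It then remains only to bound $\Pi(N)$. The number of higher prime powers $p^k\leq N$ with $k\geq 2$ is $O(\sqrt N)$, so $\Pi(N)=\pi(N)+O(\sqrt N)$, and the prime number theorem (or merely the Chebyshev upper bound $\pi(N)=O(N/\log N)$) yields $\Pi(N)\leq 2N/\log N$ once $N$ is large enough. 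Substituting this gives
$$
|\mathcal{M}_{\ell,\varepsilon,N}|\leq e^{4\ell\,\log(2\varepsilon^{-1})\,\frac{N}{\log N}},
$$
as claimed.

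The argument is essentially bookkeeping; the only place where anything beyond elementary counting enters is the asymptotic $\Pi(N)\sim N/\log N$, and this is exactly what forces the qualifier ``for all large enough $N$'' in the statement. Since the factor $2$ in the bound is comfortably larger than the true leading constant $1$, no sharp form of the prime number theorem is needed. I would expect the one point worth stating carefully to be the injectivity claim: distinct assignments of values on prime powers $\leq N$ really do produce distinct elements of $\mathcal{M}_N$, because each prime power $p^k\leq N$ lies in $[N]$ and hence its value is recorded by $g$ itself, which is what makes $g\mapsto (g(p^k))_{p^k\leq N}$ injective.
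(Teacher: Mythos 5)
Your proof is correct and follows essentially the same route as the paper's: both identify an element of $\mathcal{M}_{\varepsilon,N}$ with its tuple of values on prime powers in $[N]$, bound the number of prime powers by $2N/\log N$ for large $N$, and conclude via $|B_\varepsilon|\leq 4\varepsilon^{-2}$. The only difference is cosmetic — you first bound $|\mathcal{M}_{\varepsilon,N}|$ and raise to the $\ell$-th power, while the paper counts the $\ell$-tuples directly.
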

 \begin{proof}
 Notice first that because of multiplicativity,
 	an $\ell$-tuple $(f_1,\ldots, f_\ell)\in \mathcal{M}_{\ell,\varepsilon,N}$
 	is uniquely determined by the values  $(f_1(k_1),\ldots, f_\ell(k_\ell))$, where $k_1,\ldots, k_\ell$ range over all prime powers in $[N]$.
 	Since for large enough $N$ there are at most $2\frac{N}{\log{N}}$ prime powers up to $N$ and  $f_j(k)\in B_\varepsilon$ for
 	$j=1,\ldots, \ell$,
 	we deduce  that
 	$$
 	|\mathcal{M}_{\ell,\varepsilon,N}|\leq  	
|B_\varepsilon|^{2 \ell \frac{N}{\log{N}}}.
$$
The asserted bound follows since $|B_\varepsilon|\leq 4\varepsilon^{-2}$.
 	\end{proof}
 Combining the previous two lemmas we can prove the following   result, which is an essential ingredient of the proofs of Theorems~\ref{T:Random1} and \ref{T:Random2}.
\begin{theorem}\label{T:Random3}
	Let $(X_n(\omega))_{n\in\N}$ be a sequence of independent random variables with $\P(X_n=-1)=\P(X_n=1)=\frac{1}{2}$, $n\in\N$. Then for every $a\colon \N\to \mathbb{U}$ we have that
	$\omega$-almost surely the following holds:
	For every  $\ell\in \N$, all   multiplicative functions $f_1,\ldots, f_\ell \colon \N\to \mathbb{U}$, and all $h_1,\ldots, h_\ell \in\Z^+$, we have
\begin{equation}\label{E:fj}
	\E_{n\in\N} \, a(n)\,   X_n(\omega)\,  \prod_{j=1}^\ell f_j(n+h_j)=0.
	\end{equation}
\end{theorem}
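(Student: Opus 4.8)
The plan is to prove the statement via a Borel--Cantelli argument, exploiting the $\varepsilon$-net discretisation from Lemmas~\ref{L:close} and \ref{L:count} to reduce the uncountable family of multiplicative functions to a finite set at each scale, so that a union bound over that finite set is summable.

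First I would reduce to a single scale. Fix $\ell\in\N$, $\varepsilon>0$, $\mathbf{h}=(h_1,\dots,h_\ell)$, and consider, for a fixed tuple $\mathbf{g}=(g_1,\dots,g_\ell)\in\mathcal{M}_{\ell,\varepsilon,N}$, the random sum $S_N(\mathbf{g},\omega):=\sum_{n=1}^N a(n)\,X_n(\omega)\prod_{j=1}^\ell g_j(n+h_j)$. Conditioning on the (deterministic) $a$ and $\mathbf{g}$, this is a sum of independent mean-zero bounded random variables (the summand is $\pm$ a fixed unit-disc number), so Hoeffding's inequality gives a Gaussian tail: for every $\delta>0$,
$$
\P\Big(|S_N(\mathbf{g},\omega)|\geq \delta N\Big)\leq 2\exp\big(-c\,\delta^2 N\big)
$$
for an absolute constant $c>0$, uniformly in $\mathbf{g}$. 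Then I would union-bound over all $\mathbf{g}\in\mathcal{M}_{\ell,\varepsilon,N}$: using Lemma~\ref{L:count},
$$
\P\Big(\max_{\mathbf{g}\in\mathcal{M}_{\ell,\varepsilon,N}}|S_N(\mathbf{g},\omega)|\geq \delta N\Big)\leq 2\,e^{4\ell\log(2\varepsilon^{-1})\,\frac{N}{\log N}}\cdot e^{-c\delta^2 N}.
$$
The key point is that the cardinality grows only like $e^{C N/\log N}$, which is $e^{o(N)}$, so the whole right-hand side is summable in $N$ (even along the full sequence $N\in\N$, or along a lacunary sequence if a cleaner Borel--Cantelli is desired). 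By the first Borel--Cantelli lemma, $\omega$-almost surely $\max_{\mathbf{g}\in\mathcal{M}_{\ell,\varepsilon,N}}|S_N(\mathbf{g},\omega)|<\delta N$ for all large $N$.

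Next I would undo the discretisation. For an arbitrary multiplicative tuple $(f_1,\dots,f_\ell)$, Lemma~\ref{L:close} supplies $\mathbf{g}\in\mathcal{M}_{\ell,\varepsilon,N}$ with $\|f_j-g_j\|_{L^\infty[N]}\leq 2\varepsilon\log N$ for each $j$; telescoping over the $\ell$ factors and using $|a(n)X_n|\le 1$ gives
$$
\Big|\sum_{n=1}^N a(n)X_n\prod_{j=1}^\ell f_j(n+h_j)-S_N(\mathbf{g},\omega)\Big|\leq \ell\cdot 2\varepsilon\log N\cdot N = 2\ell\varepsilon N\log N.
$$
Since $\log N$ appears here, I must take $\varepsilon=\varepsilon_N\to 0$ fast enough to kill it, say $\varepsilon_N=1/(\log N)^2$; then the approximation error is $O(N/\log N)=o(N)$, while Lemma~\ref{L:count} still gives cardinality $e^{4\ell\log(2\varepsilon_N^{-1})N/\log N}=e^{O(N\log\log N/\log N)}=e^{o(N)}$, so the union bound stays summable. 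I would therefore run the Borel--Cantelli step with $\varepsilon_N$ scale-dependent, obtaining $\omega$-a.s.\ that for all large $N$ and \emph{all} multiplicative $(f_1,\dots,f_\ell)$ the normalised sum is $<\delta+o(1)$; letting $\delta\to0$ along a countable sequence and intersecting over the countably many choices of $(\ell,\mathbf{h})$ yields the full conclusion \eqref{E:fj} on a single full-measure set independent of the $f_j$.

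The main obstacle is the competition between the two exponential rates and the stray $\log N$ factor. The union bound over $\mathcal{M}_{\ell,\varepsilon,N}$ costs $e^{\Theta(\log(\varepsilon^{-1})N/\log N)}$ while concentration saves $e^{-\Theta(\delta^2 N)}$; one must check that for the chosen $\varepsilon_N\to0$ the saving beats the cost with room to spare for summability, and simultaneously that $\varepsilon_N$ still tends to zero fast enough that $\varepsilon_N\log N\to0$ so the net-approximation error from Lemma~\ref{L:close} is $o(N)$. The choice $\varepsilon_N=(\log N)^{-2}$ (or any power) threads both needs, since $\log(\varepsilon_N^{-1})/\log N\to0$ and $\varepsilon_N\log N\to0$; the only genuine care needed is to make the $N$-dependence of $\varepsilon$ explicit before invoking Borel--Cantelli, rather than fixing $\varepsilon$ first.
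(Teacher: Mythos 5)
Your proposal is correct and follows essentially the same route as the paper's proof: Bernstein/Hoeffding concentration for each fixed discretised tuple, a union bound over $\mathcal{M}_{\ell,\varepsilon_N,N}$ with $\varepsilon_N=(\log N)^{-2}$ so that the entropy cost $e^{O(N\log\log N/\log N)}$ is beaten by the concentration saving, Borel--Cantelli, and then Lemma~\ref{L:close} with $\varepsilon_N\log N\to 0$ to pass from the net back to arbitrary multiplicative functions. The only cosmetic difference is that the paper takes a scale-dependent threshold $\delta_N=(\log N)^{-1/3}$ in a single Borel--Cantelli step, whereas you fix $\delta$ and let it tend to zero along a countable sequence afterwards; both are fine.
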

\begin{remarks}
$\bullet$ As was the case with Theorem~\ref{T:Random2}, the important point in  this statement is that the set of $\omega$'s for which \eqref{E:fj} holds can be chosen independently of the (uncountably many) multiplicative functions $f_1,\ldots, f_\ell\colon \N\to \mathbb{U}$.

$\bullet$ We note that for $\ell=1$  the previous result can also  be  proved using an orthogonality criterion 
by utilizing the fact that for every   $b\colon \N\to\mathbb{U}$ we have  $\omega$-almost surely
 $\E_{n\in\N} \, b(n)\,  X_{np}(\omega)\,   X_{nq}(\omega)=0$ for all $p\neq q$. But this method does not seem to be  of much help when $\ell\geq 2$
and it is the $\ell=2$ case that is needed in the proof of Theorem~\ref{T:Random1}.
\end{remarks}
\begin{proof}
Since $\ell$ and $h_1,\ldots, h_\ell$   take values on a countable set, it suffices to show that for all fixed $\ell\in \N$, $h_1,\ldots, h_\ell\in \Z^+$,   and $a\colon \N\to\mathbb{U}$,  the following statement  holds $\omega$-almost surely: For
all   multiplicative functions $f_1,\ldots, f_\ell \colon \N\to \mathbb{U}$ we have
	$$
	\E_{n\in\N} \, a(n)\,  X_n(\omega)\,  \prod_{j=1}^\ell f_j(n+h_j)=0.
	$$
To prove this, we first  note that using standard concentration of measure estimates (for example Bernstein's exponential inequality) we have for every fixed sequence $b\colon \N\to \mathbb{U}$ and every $N\in\N$ and $\delta>0$ that
\begin{equation}\label{E:Bernstein}
\mathbb{P}(|\E_{n\in [N]} X_n(\omega) \, b(n)|\geq \delta)\leq e^{-\frac{1}{4}\delta^2N}.
\end{equation}
We let
$$
\delta_N:=(\log{N})^{-1/3} \text{ and  }
\varepsilon_N:=(\log{N})^{-2}, \quad N\in \N.
$$
Using  the notation introduced in \eqref{E:CM}, we get
 for every large enough $N\in \N$ that
\begin{multline*}
\mathbb{P}\big(\sup_{(g_1,\ldots,g_\ell)\in \mathcal{M}_{\ell, \varepsilon_N, N}}|\E_{n\in [N]} \, a(n)\,  X_n(\omega) \, \prod_{j=1}^\ell g_j(n+h_j)|\geq \delta_N\big)\leq e^{-\frac{1}{4}\delta_N^2N}\
 |\mathcal{M}_{\ell,\varepsilon_N,N}|\\
 \leq
 e^{-\frac{1}{4}\delta_N^2N+4\ell \log(2\varepsilon_N^{-1}) \, \frac{N}{\log{N}}}\leq
e^{-\frac{1}{5}\frac{N}{(\log{N})^{2/3}}},
\end{multline*}
where  the first estimate follows from the union bound and \eqref{E:Bernstein}, and the second estimate follows from Lemma~\ref{L:count}.
 Using the Borel-Cantelli lemma we deduce that $\omega$-almost surely we have
 $$
\lim_{N\to\infty}\sup_{(g_1,\ldots,g_\ell)\in \mathcal{M}_{\ell, \varepsilon_N, N}}|\E_{n\in [N]}\,  a(n)\, X_n(\omega) \, \prod_{j=1}^\ell g_j(n+h_j)|=0.
$$
Using Lemma~\ref{L:close}, the fact that $\varepsilon_N\log{N}\to 0$, and telescoping, we deduce that  $\omega$-almost surely we have
$$
\lim_{N\to\infty}\sup_{f_1,\ldots,f_\ell\in  \mathcal{M}}|\E_{n\in [N]}\, a(n)\, X_n(\omega) \, \prod_{j=1}^\ell f_j(n+h_j)|=0.
$$
This completes the proof.
\end{proof}

	\begin{proof}[Proof of Theorems~\ref{T:Random1} and \ref{T:Random2}]
Let $f_1,\ldots, f_\ell$ and $h_1,\ldots, h_\ell$ be as in Theorem~\ref{T:Random2}. Note
that  $\omega$-almost surely  the sequence  $(X_k(\omega)\prod_{j=1}^\ell f_j(k+h_j))_{k\in\N}$ is   non-null, since  $\omega$-almost surely $|X_k(\omega)|=1$, $k\in\N$,  and by assumption $(\prod_{j=1}^\ell f_j(k+h_j))_{k\in\N}$ is non-null.
Likewise, if  $a\colon \N\to \mathbb{U}$ is a  non-null sequence and $f\colon \N\to \mathbb{S}$ is a multiplicative function, then  $\omega$-almost surely $(a(k) \, X_k(\omega) f(k))_{k\in\N}$ is  non-null.

Since all fixed parameters  that appear below take values on a countable set, by
 Proposition~\ref{P:corrzero2} (for Theorem~\ref{T:Random1}) and
Proposition~\ref{P:corrzero} (for Theorem~\ref{T:Random2}) it suffices to show that for every fixed $b\colon \N\to \mathbb{S}$, all
$h,\ell \in \N$, and all $h_1,\ldots, h_\ell \in\Z^+$,
	we have  $\omega$-almost surely the following  (for Theorem~\ref{T:Random1} we only need to use the case $\ell=1$, $h_1=0$):
	For all   multiplicative functions $f_1,\ldots, f_\ell \colon \N\to \mathbb{U}$ we have
	\begin{equation}\label{E:wanted}
	\E_{n\in\N} \, b(n)\, X_{n+h}(\omega)\cdot X_n(\omega)  \prod_{j=1}^\ell   f_j(n+h+h_j)\prod_{j=1}^\ell \overline{f_j(n+h_j)}=0.
	\end{equation}
(Note that then \eqref{E:wanted} also holds with $\lE_{n\in\N}$ in place of $\E_{n\in\N}$.)
We partition the positive integers into the following two sets
$$
S_1:=\bigcup_{k\in \Z^+}[2kh,(2k+1)h), \quad
S_2:=\bigcup_{k\in \Z^+}[(2k+1)h,(2k+2)h).
$$
We let
$$
Y_n(\omega):=X_{n+h}(\omega)\cdot X_n(\omega), \quad n\in \N.
$$
Note that  $\P(Y_n=-1)=\P(Y_n=1)=\frac{1}{2}$ for all  $n\in\N$. Moreover,
 for $n\in S_1$ (and fixed $h\in\N$) the random variables $Y_n(\omega)$
are independent,  and the same holds for the random variables $Y_n(\omega)$ for $n\in S_2$. For $i=1,2$ we consider independent random variables $Z_{n,i}(\omega)$, $n\in\N$, such that $\P(Z_{n,i}=-1)=\P(Z_{n,i}=1)=\frac{1}{2}$, $n\in\N$, and $Z_{n,i}:=Y_n$ for $n\in S_i$.
For $i=1,2$,  we apply  Theorem~\ref{T:Random3} for the random variables $(Z_{n,i}(\omega))_{n\in\N}$ and $a_i(n):=b(n)\, {\bf 1}_{S_i}(n)$ (then $a_i(n)\, Z_{n,i}=b(n)\, {\bf 1}_{S_i}(n)\,    Y_n$, $n\in\N$), and  deduce  that $\omega$-almost surely we have
$$
	\E_{n\in\N}\, {\bf 1}_{S_i}(n) \, b(n) \,  Y_n(\omega) \prod_{j=1}^\ell f_j(n+h+h_j)\prod_{j=1}^\ell \overline{f_j(n+h_j)}=0
	$$
for $i=1,2$. Adding the two identities we get \eqref{E:wanted}. This completes the proof.
		\end{proof}

\subsection{Proof of Theorem~\ref{T:Random1'}}\label{SS:1.6}
We will use the following finitistic strengthening of Theorem~\ref{T:Tao} that  can be deduced from Theorem~\ref{T:Tao}  using a compactness argument:
\begin{theorem}\label{T:Taofinite}
For every $C>0$ there exists $m\in \N$ such that for every sequence $a\colon [m]\to \S$ there exist $d,n\in \N$ with $dn\leq m$ such that
$|\sum_{k=1}^na(dk)|>C$.
\end{theorem}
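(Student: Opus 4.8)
The goal is to deduce Theorem~\ref{T:Taofinite} from Theorem~\ref{T:Tao} by a standard compactness (diagonalization) argument, arguing by contradiction.

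\medskip

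The plan is as follows. Suppose the statement fails for some fixed $C>0$. Then for \emph{every} $m\in\N$ there exists a sequence $a_m\colon[m]\to\S$ such that for all $d,n\in\N$ with $dn\leq m$ we have $|\sum_{k=1}^n a_m(dk)|\leq C$. First I would extend each $a_m$ arbitrarily to a function on all of $\N$, say by setting $a_m(k):=1$ for $k>m$, so that each $a_m$ becomes a point of the compact metrizable space $\S^\N$ (equipped with the product topology). Since $\S^\N$ is sequentially compact, I would pass to a subsequence $(a_{m_i})_{i\in\N}$ converging pointwise to some limit $a\colon\N\to\S$; note $a$ indeed takes values in $\S$ since $\S$ is closed in $\mathbb{U}$.

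\medskip

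The key step is to transfer the uniform bound to the limit. Fix any $d,n\in\N$. For all sufficiently large $i$ we have $dn\leq m_i$, so by construction $|\sum_{k=1}^n a_{m_i}(dk)|\leq C$. Because the sum $\sum_{k=1}^n a_{m_i}(dk)$ involves only finitely many coordinates (namely the values at $d,2d,\ldots,nd$), pointwise convergence $a_{m_i}\to a$ forces
$$
\Big|\sum_{k=1}^n a(dk)\Big|=\lim_{i\to\infty}\Big|\sum_{k=1}^n a_{m_i}(dk)\Big|\leq C.
$$
Since $d,n$ were arbitrary, this yields $\sup_{d,n\in\N}\big|\sum_{k=1}^n a(dk)\big|\leq C<+\infty$, which contradicts Theorem~\ref{T:Tao} applied to the sequence $a$. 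Hence no such $C$ can be a counterexample, and the claimed $m$ exists for every $C$.

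\medskip

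I expect the only mildly delicate point to be the bookkeeping in the compactness step: one must be careful that the bound $|\sum_{k=1}^n a_m(dk)|\leq C$ is available for a given pair $(d,n)$ \emph{only once $m\geq dn$}, so the uniform estimate for the limit is obtained pair-by-pair rather than all at once. This is precisely why the finitistic hypothesis is stated with the constraint $dn\leq m$ rather than a blanket bound. Everything else is routine, and no results beyond Theorem~\ref{T:Tao} and the sequential compactness of $\S^\N$ are needed.
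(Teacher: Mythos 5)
Your argument is correct and is exactly the compactness argument the paper alludes to (the paper states only that Theorem~\ref{T:Taofinite} ``can be deduced from Theorem~\ref{T:Tao} using a compactness argument'' without supplying details). The contradiction setup, the passage to a pointwise convergent subsequence in $\S^\N$, and the pair-by-pair transfer of the bound $|\sum_{k=1}^n a_{m_i}(dk)|\leq C$ to the limit are all as intended, so nothing further is needed.
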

We deduce from this some necessary conditions for a sequence to be a good weight for the Erd\H{o}s discrepancy problem.
\begin{lemma}\label{L:goodEDP}
Let $w\colon \N\to \C$ be a sequence and $c\in \C\setminus\{0\}$. Suppose  that for infinitely many $m\in \N$ there exists
 $r\in \N$ such that
 \begin{equation}\label{E:wrm}
w\big(r\frac{m!}{i}+j\big)=c \quad \text{ for all } i,j\in\{1,\ldots m\}.
\end{equation}
Then $w$ is a good weight for the Erd\H{o}s discrepancy problem.
\end{lemma}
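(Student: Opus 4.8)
The plan is to argue by contradiction. Assume $w$ is not a good weight, so there is a sequence $a\colon\N\to\S$ and a constant $C$ with $\sup_{d,n\in\N}\big|\sum_{k=1}^n a(dk)\,w(k)\big|\le C$, and I aim to contradict the finitistic discrepancy bound of Theorem~\ref{T:Taofinite}. The whole point of the structure hypothesis is that on each of the blocks $\{r\frac{m!}{i}+1,\ldots,r\frac{m!}{i}+m\}$ the weight equals the fixed constant $c$; hence differencing two weighted partial sums with dilation $i$ produces an \emph{unweighted} homogeneous progression sum of a shifted copy of $a$, to which the classical discrepancy theorem can be applied.

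First I would fix the constant $C_0:=2C/|c|$ and apply Theorem~\ref{T:Taofinite} with this $C_0$ to obtain some $m_0\in\N$. I would note that once the finitistic statement holds at $m_0$, it holds at every $m\ge m_0$: given $b\colon[m]\to\S$ one applies the theorem to $b|_{[m_0]}$ and gets $d,n$ with $dn\le m_0\le m$. Since the hypothesis supplies the block structure for infinitely many $m$, I may then select a single $m\ge m_0$ for which there exists $r\in\N$ with $w\big(r\frac{m!}{i}+j\big)=c$ for all $i,j\in\{1,\ldots,m\}$. It is essential that $C_0$ be fixed \emph{before} choosing $m$, which is why $m_0$ is extracted first.

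The key algebraic step is to set $\beta(t):=a(rm!+t)$ for $t\in\N$, a sequence with values in $\S$. For any $d,n$ with $dn\le m$ (so $d,n\le m$ and $d\mid m!$, whence $N_d:=r\frac{m!}{d}\in\N$), I consider the weighted sum with dilation $d$. Since $i:=d\le m$ and the offsets $j\le n\le m$ lie in range, the hypothesis gives $w(k)=c$ on the block $k\in\{N_d+1,\ldots,N_d+n\}$, so differencing the two bounded partial sums yields
$$ c\sum_{j=1}^n a\big(d(N_d+j)\big)\;=\;\sum_{k=1}^{N_d+n} a(dk)\,w(k)-\sum_{k=1}^{N_d} a(dk)\,w(k), $$
whose modulus is at most $2C$. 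Using $dN_d=rm!$ one has $a\big(d(N_d+j)\big)=a(rm!+dj)=\beta(dj)$, and therefore $\big|\sum_{j=1}^n\beta(dj)\big|\le 2C/|c|=C_0$ for every $d,n$ with $dn\le m$.

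Finally I apply the finitistic discrepancy statement at the chosen $m\ge m_0$ to the sequence $\beta|_{[m]}\colon[m]\to\S$: it produces $d,n$ with $dn\le m$ and $\big|\sum_{j=1}^n\beta(dj)\big|>C_0$, contradicting the previous bound. The step needing the most care is the bookkeeping that pins $w(k)=c$ to exactly the block indexed by the pair $(i,j)=(d,j)$ with $1\le j\le n$: this is where the factorial dilation $r\frac{m!}{d}$ together with the identity $a\big(d(r\frac{m!}{d}+j)\big)=a(rm!+dj)$ converts a \emph{shifted} homogeneous progression for $a$ into a \emph{genuine} homogeneous progression for $\beta$, making Theorem~\ref{T:Taofinite} applicable.
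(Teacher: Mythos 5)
Your proof is correct and follows essentially the same route as the paper: shift $a$ by $rm!$, use the factorial structure so that $r\frac{m!}{d}$ is an integer and $w$ equals $c$ on the relevant block, difference two partial sums, and invoke the finitistic Theorem~\ref{T:Taofinite}. The only differences are cosmetic (you argue by contradiction where the paper argues directly, and you spell out the monotonicity in $m$ of the finitistic statement, which the paper uses implicitly).
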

\begin{remark}
The conclusion fails if we simply assume that $w$  is equal to  a non-zero constant on  a union of arbitrarily long intervals. To see this, let $(a(k))_{k\in\N}$ be a completely multiplicative function that is equal to  $(-1)^n$  on a sequence of  intervals with  lengths even numbers that increase to infinity (such a multiplicative function can be explicitly constructed). Let also $w$  be the indicator function of the union of this sequence of intervals. Then
$\sup_{d, n\in \N}\big|\sum_{k=1}^n a(dk) \, w(k)\big|\leq 1$.
\end{remark}
\begin{proof}
Let  $a\colon \N\to \S$ be a sequence and $C>0$.
Let $m\in\N$ be  so that   Theorem~\ref{T:Taofinite} applies for this $C$ and \eqref{E:wrm} holds for some $c\in \C\setminus\{0\}$ and  $r\in \N$.
 We use   Theorem~\ref{T:Taofinite}
for the sequence $(a(rm!+k))_{k\in[m]}$ and we get that there exist $d,n\in \N$, with $dn\leq m$,  such that
\begin{equation}\label{E:rmdk}
\Big|\sum_{k=1}^na(rm!+dk)\Big|\geq \frac{C}{|c|} .
\end{equation}
We let
$$
S_d(N):=\sum_{k=1}^Na(kd)\, w(k), \quad N\in \N.
$$
Note that
$$
S_d\big(r\frac{m!}{d}+n\big)-S_d\big(r\frac{m!}{d}\big)=\sum_{k=1}^na(rm!+dk)\,  w\big(r\frac{m!}{d}+k\big).
$$
Since $d, n\leq m$, using the previous identity, and  \eqref{E:wrm}, \eqref{E:rmdk}, we deduce that
$$
\big|S_d\big(r\frac{m!}{d}+n\big)-S_d\big(r\frac{m!}{d}\big)\big|= |c| \Big|\sum_{k=1}^na(rm!+dk)\Big|\geq  C.
$$
Hence,  either $|S_d\big(r\frac{m!}{d}+n\big)|\geq \frac{C}{2}$ or $|S_d\big(r\frac{m!}{d}\big)|\geq \frac{C}{2}$. Since $C$ was arbitrary, we deduce that
$\sup_{d,N\in\N}|S_d(N)|=+\infty$. This  completes the proof.
\end{proof}
\begin{proof}[Proof of Theorem~\ref{T:Random1'}]
Let $c\in \C\setminus\{0\}$ be such that
$\sum_{k\in \N}\rho_k^l=+\infty$ for every $l\in \N$,  where $\rho_k:=\P(X_k=c)$, $k\in \N$.
Let $m\geq 4$.
By Lemma~\ref{L:goodEDP}, it suffices to show  that $\omega$-almost surely  there exists
 $r\in \N$ such that
 $$
X_{r\frac{m!}{i}+j}(\omega)=c \quad \text{ for all } i,j\in [m].
$$
  One easily verifies that for any fixed $m\geq 4$ the random variables
$X_{r\frac{m!}{i}+j}$, $i,j\in[m]$,  $r\in m!\N+1$, are independent. Hence,
$$
\P\big(X_{r\frac{m!}{i}+j}(\omega)=c \  \text{ for all } i,j\in [m]\big)=
\prod_{i,j\in[m]}\P\big(X_{r\frac{m!}{i}+j}(\omega)=
c \big)
=\prod_{i,j\in[m]}\rho_{r\frac{m!}{i}+j}.
$$
Since $(\rho_k)_{k\in \N}$ is decreasing, we have  that $\prod_{i,j\in[m]}\rho_{r\frac{m!}{i}+j}
\geq \rho^{m^2}_{r(m+1)!}$ for all $r\in \N$. Moreover, since $ \sum_{k\in \N} \rho_k^{m^2}=+\infty$,
using again that  $(\rho_k)_{k\in \N}$ is decreasing,
we get that $\sum_{r\in m!\N+1} \rho^{m^2}_{r(m+1)!}=+\infty$. We deduce that
$$
\sum_{r\in m!\N+1}\P\big(X_{r\frac{m!}{i}+j}(\omega)=c  \text{ for all } i,j\in [m]\big)=+\infty.
$$
Since the  sets involved in the above  probabilities are independent,  the Borel-Cantelli theorem applies, and gives
that  $\omega$-almost surely for infinitely many $r\in \N$ we have that
$X_{r\frac{m!}{i}+j}(\omega)=c$  for all $i,j\in [m]$.
This completes the proof.
\end{proof}

\section*{Acknowledgments} 
  I would like to thank    M.~Kolountzakis for
providing the proof of Theorem~\ref{T:Random1'} and other useful remarks, 
and S.~Pattison
 for pointing out a correction in the statement of Proposition~\ref{P:ex}.
I would also like to thank the American Institute of Mathematics (AIM) for its hospitality;   part of this work was motivated by problems raised during the 2018 workshop ``Sarnak's Conjecture''.

\bibliographystyle{amsplain}

\begin{thebibliography}{99}


 \bibitem{FH15}
 N.~Frantzikinakis,   B.~Host.
\newblock Higher order Fourier analysis of multiplicative functions and applications.
\newblock  {\em J. Amer. Math. Soc.} {\bf 30}  (2017), 67--157.




\bibitem{FH18}
N.~Frantzikinakis, B.~Host.
\newblock The logarithmic Sarnak conjecture for ergodic weights.
\newblock  {\em Ann. of Math. (2)} {\bf 187} (2018), 869--931.

\bibitem{FH19}
N.~Frantzikinakis, B.~Host.
\newblock  Furstenberg systems of bounded multiplicative functions and applications.
\newblock  To appear in {\em Int. Math. Res. Not.}, \texttt{arXiv:1804.08556}

\bibitem{Fu67} H.~Furstenberg.
\newblock  Disjointness in ergodic theory, minimal sets, and a problem in diophantine approximation.
\newblock  {\em Math. Systems Theory} {\bf 1} (1967), 1--49.

\bibitem{Fu77} H.~Furstenberg.
\newblock  Ergodic behavior of diagonal measures and a theorem of Szemer\'edi
on arithmetic progressions.
\newblock   {\em J. Analyse Math.} \textbf{31} (1977), 204--256.


\bibitem{Fu}
H.~Furstenberg.
\newblock  {\em Recurrence in Ergodic Theory and Combinatorial
	Number Theory.}
\newblock    Princeton University Press, Princeton 1981.

\bibitem{G13} W.~T.~Gowers.
\newblock   {\em Erd\H{o}s and arithmetic progressions.} 
\newblock  Erd\H{o}s Centeniall, Bolyai Society Mathematical Studies, 25, L.~Lovasz, I.~Z.~Rusza, V.~ T.~Sos eds., Springer 2013, 265--287.

\bibitem{K73}
T.~Kamae.
\newblock    Subsequences of normal sequences.
\newblock   {\em Israel J. Math.} {\bf  16} (1973), 121--149.


\bibitem{K17} O.~Klurman.
\newblock   Correlations of multiplicative functions and applications. 
\newblock  {\em Compos. Math.} {\bf 153} (2017), vol. 8, 1622--1657.

\bibitem{MR15}
	K.~Matom\"aki, M.~Radziwi{\l}{\l}.
\newblock  	Multiplicative functions in short intervals. 
\newblock   {\em Ann. of Math. (2)}   {\bf 183} (2016), 1015--1056.

\bibitem{MRT15}	K.~Matom\"aki, M.~Radziwi{\l}{\l},  T.~Tao.
 \newblock  An averaged form of Chowla's conjecture.
  \newblock  {\em Algebra  Number Theory} {\bf 9} (2015), 2167--2196.

\bibitem{Pa} S.~Pattison. 
\newblock  {\em Ergodic theory and the conjectures of Paul  Erd\H{o}s.}
Undergraduate thesis, University of Warwick, (2020). \texttt{}

\bibitem{P5} Polymath5. 
\newblock  {\em The Erd\H{o}s discrepancy problem.}
 \texttt{http://michaelnielsen.org/polymath1}

\bibitem{T15} T.~Tao. 
\newblock  The logarithmically averaged Chowla and Elliott conjectures for two-point correlations. 
\newblock  {\em Forum Math. Pi}  {\bf 4} (2016).

 \bibitem{T16} T.~Tao. 
 \newblock  The Erd\H{o}s discrepancy problem.
 \newblock  {\em Discrete Analysis}, 2016:1, 27pp.

 \bibitem{TT18} 
  T.~Tao, J.~Ter\"av\"ainen.
\newblock  		The structure of logarithmically averaged correlations of multiplicative functions, with applications to the Chowla and Elliott conjectures.  	
\newblock  	 {\em Duke Math. J.}	{\bf 168} (2019), 1977--2027.

\end{thebibliography}



\begin{dajauthors}
	\begin{authorinfo}[Nikos]
		Nikos Frantzikinakis\\
		Department of Mathematics and Applied Mathematics\\
		University of Crete\\
		Voutes University Campus,
		Heraklion 70013, Greece\\
		frantzikinakis \imageat{} gmail\imagedot{}com  \\
		\url{http://users.math.uoc.gr/~frantzikinakis/}
	\end{authorinfo}
\end{dajauthors}

\end{document}